\definecolor{gr}{rgb}   {0.,   0.69,   0.23 }
\definecolor{bl}{rgb}   {0.,   0.5,   1. }
\definecolor{mg}{rgb}   {0.85,  0.,    0.85}
\definecolor{yl}{rgb}   {0.8,  0.7,   0.}
\definecolor{or}{rgb}  {0.7,0.2,0.2}
\newtheorem{theorem}{Theorem} [section]
\newtheorem{lemma}[theorem]{Lemma}
\newtheorem{remark}[theorem]{Remark}
\DeclareMathOperator*{\supp}{supp}
\newcommand{\noi}{\noindent}
\newcommand{\Z}{\mathbb{Z}}
\newcommand{\R}{\mathbb{R}}
\newcommand{\T}{\mathbb{T}}
\let\Re=\undefined\DeclareMathOperator*{\Re}{Re}
\let\Im=\undefined\DeclareMathOperator*{\Im}{Im}
\let\P= \undefined
\newcommand{\P}{\mathbf{P}}
\newcommand{\E}{\mathbb{E}}
\newcommand{\F}{\mathcal{F}}
\newcommand{\al}{\alpha}
\newcommand{\be}{\beta}
\newcommand{\nb}{\nabla}
\newcommand{\Dl}{\Delta}
\newcommand{\eps}{\varepsilon}
\newcommand{\g}{\gamma}
\newcommand{\s}{\sigma}
\newcommand{\ft}{\widehat}
\newcommand{\wt}{\widetilde}
\newcommand{\dx}{\partial_x}
\newcommand{\dt}{\partial_t}
\newcommand{\dd}{\partial}
\newcommand{\ta}{\theta}
\renewcommand{\O}{\Omega}
\newcommand{\les}{\lesssim}
\newcommand{\jb}[1]
{\langle #1 \rangle}
\newcommand{\ind}{\mathbf 1}
\renewcommand{\S}{\mathcal{S}}
\newcommand{\N}{\mathbb{N}}
\newcommand{\EE}{\mathcal{E}}
\numberwithin{equation}{section}
\numberwithin{theorem}{section}
\newcommand{\ze}{\zeta}
\newcommand{\PP}{\mathbb{P}}
\newcommand{\CC}{\mathcal{C}}
\DeclareMathOperator{\Law}{Law}
\newcommand{\V}{\mathcal{V}}
\newcommand{\dr}{\theta}
\newcommand{\Dr}{\Theta}
\newcommand{\Ha}{\mathbb{H}_a}
\renewcommand{\u}{\vec{u}}
\newcommand{\GNS}{\textup{GNS}}
\newcommand{\per}{\textup{per}}
\begin{document}
\baselineskip = 14pt

\title[Optimal divergence rate of focusing Gibbs measures]
{Optimal divergence rate of the focusing Gibbs measures}

\author[D.~Greco, G.~Li, R.~Liang, T.~Oh, and Y.~Wang]
{Damiano Greco, Guopeng Li, Rui Liang, Tadahiro Oh, and Yuzhao Wang}

\address{
Damiano Greco\\School of Mathematics\\
The University of Edinburgh\\
and The Maxwell Institute for the Mathematical Sciences\\
James Clerk Maxwell Building\\
The King's Buildings\\
Peter Guthrie Tait Road\\
Edinburgh\\ 
EH9 3FD\\
 United Kingdom}

\email{dgreco@ed.ac.uk}

\address{Guopeng Li, 
School of Mathematics and Statistics, Beijing Institute of Technology, Beijing 100081, China\\
and
School of Mathematics\\
The University of Edinburgh\\
and The Maxwell Institute for the Mathematical Sciences\\
James Clerk Maxwell Building\\
The King's Buildings\\
Peter Guthrie Tait Road\\
Edinburgh\\
EH9 3FD\\
United Kingdom}

\email{guopeng.li@bit.edu.cn}

\address{
Rui Liang\\
School of Mathematical Sciences\\ 
South China Normal University\\ 
Guangzhou\\ 510631, P. R. China, 
and
School of Mathematics\\
University of Birmingham\\ 
Watson Building\\ 
Edgbaston\\
Birmingham\\
B15 2TT\\
United Kingdom, 
and 
Department of Mathematics and Statistics\\
Lederle Graduate Research Tower\\
University of Massachusetts Amherst\\
710 N. Pleasant Street\\
Amherst\\
MA 01003-9305\\
USA 
}
	
\email{ruiliang@umass.edu}

%

\address{
Tadahiro Oh, School of Mathematics\\
The University of Edinburgh\\
and The Maxwell Institute for the Mathematical Sciences\\
James Clerk Maxwell Building\\
The King's Buildings\\
Peter Guthrie Tait Road\\
Edinburgh\\ 
EH9 3FD\\
 United Kingdom, 
 and 
 School of Mathematics and Statistics, Beijing Institute of Technology,
Beijing 100081, China
}

\email{hiro.oh@ed.ac.uk}

\address{
Yuzhao Wang\\
School of Mathematics\\
University of Birmingham\\ 
Watson Building\\ 
Edgbaston\\
Birmingham\\
B15 2TT\\
United Kingdom}
	
	\email{y.wang.14@bham.ac.uk}

\subjclass[2020]{60H30, 81T08, 35Q55, 60H40}

\keywords{Gibbs measure; non-normalizability; phase transition}

\begin{abstract}
We study Gibbs measures on the $d$-dimensional torus with 
 $L^2$-(super)critical 
 focusing 
 interaction potentials.
We establish
a precise divergence rate
of the partition function
as we remove regularization, 
where the optimal constant is given by 
(i)~(the negative of) the minimum value
of the Hamiltonian given an $L^2$-constraint in the $L^2$-critical case
and 
(ii)~the optimal constant for certain Bernstein's inequality
in the mass-supercritical case.
In particular, 
our result in the  $L^2$-critical case precisely quantifies
the  phase transition of the focusing Gibbs measure
at the critical  $L^2$ threshold, 
previously studied by Lebowitz, Rose, and Speer (1988)
and Sosoe, Tolomeo, and the fourth author (2022).

\end{abstract}


\maketitle

\tableofcontents

\newpage

\section{Introduction}
\subsection{Focusing Gibbs measures}
\label{SUBSEC:1.1}

In this paper, we study the  Gibbs measure  $\rho = \rho_{d, s, p}$
on  the $d$-dimensional torus $\mathbb{T}^d = (\R/\Z)^d$
with a {\it focusing} interaction potential, 
formally given by\footnote{Hereafter,
we use 
$Z$ to denote various normalization constants whose values may change line by line.}
\begin{align}
d\rho(u) = Z^{-1} \exp 
\bigg(\frac{1}{p} \int_{\mathbb{T}^d} |u|^p dx\bigg) d\mu(u), 
\label{Gibbs1}
\end{align}

\noi
where $s>\frac d2$, 
and $p >2$.
Here, $\mu = \mu_{d, s}$ 
denotes  the
massless fractional Gaussian free field
 on $\mathbb{T}^d$, formally given by:
\begin{align}
\begin{split}
d  \mu (u) &  = Z^{-1} e^{-\frac12 \|u \|_{\dot H^s(\T^d)}^2} d u \\
& = Z^{-1} \prod_{n \in \mathbb{Z}^d\backslash\{0\}}  e^{-\frac12  (2\pi |n|)^{2s} |\ft u(n)|^2} d \ft u (n),
\end{split}
\label{gauss1}
\end{align}

\noi
restricted to mean-zero functions on $\T^d$.
We note that,  
when $s=1$, the measure $\mu_{d, 1}$ corresponds to the massless 
Gaussian free field on $\mathbb{T}^d$
(restricted to mean-zero functions on $\T^d$).
See~\cite{LSSW}
for a survey on fractional Gaussian free fields;
see also \cite{She}.
The Gaussian measure $\mu_{d, s}$ 
indeed corresponds to  the induced probability measure under the map:
\begin{align}
\label{series1}
\omega\in \Omega \longmapsto u^\omega(x) = 
\sum_{n \in \mathbb{Z}^d\backslash\{0\}} \frac{g_n (\omega)}{ (2\pi |n|)^s}  e^{2\pi i n\cdot x},
\end{align}

\noi
where 
$\{ g_n \}_{n \in \Z^d\backslash \{0\}}$
is a sequence of  independent standard complex-valued
Gaussian random variables on 
a probability space $(\O,\F,\PP)$.\footnote{In particular,
$\Re g_n$ and $\Im g_n$ are real-valued Gaussian random variables
with mean 0 and variance $\frac 12$.}
Using the random Fourier series representation \eqref{series1}, 
it is easy to see that 
a typical  element $u$  in the support of $\mu_{d, s}$ 
belongs to~$\dot W^{\s, r}(\mathbb{T}^d) \setminus \dot W^{s - \frac d2, r}(\mathbb{T}^d)$ for any 
$\s < s - \frac d2$ and  $1\leq r \leq \infty$, 
where 
  $\dot W^{\s, r}(\mathbb{T}^d)$ denotes the homogeneous Sobolev space, 
restricted to mean-zero functions, 
defined by the norm:
\begin{align*}
\| f \|_{\dot W^{s, r}(\mathbb{T}^d)} = \| D^{s} f \|_{L^r(\mathbb{T}^d)}.
\end{align*}

\noi
Here,  $D^s$ denotes the Riesz potential of order $-s$; see~\eqref{Riesz1}.
In this paper, we restrict our attention 
to the case $s > \frac d2$ such that 
a typical element under $\mu_{d, s}$ is  a function on~$\T^d$
such that no renormalization is required.

Let us first discuss the case 
$s = 1$, which is the most fundamental case
from both the physical and mathematical viewpoints.
In this case, the Gibbs measure $\rho$ in \eqref{Gibbs1}
formally
 corresponds
to the Gibbs measure
of the form:\footnote{Here, we ignore the issue at the zeroth frequency.
See Remark \ref{REM:mean}.}
\begin{align}
d\rho = Z^{-1} e^{- H_\text{NLS}(u)} d u 
\label{Gibbs1a}
\end{align}

\noi
for the 
nonlinear Schr\"odinger equation (NLS) on $\T^d$:
\begin{align}
i \dt u +  \Dl u + |u|^{p-2} u = 0, 
\label{NLS1}
\end{align}

\noi
generated by the Hamiltonian functional $H_\text{NLS}(u)$ given by 
\begin{align*}
H_\text{NLS}(u) = \frac 12\int_{\T^d} |\nb u|^2 d x - \frac 1 p\int_{\T^d} |u|^p d x.
\end{align*}

\noi
The NLS equation \eqref{NLS1}
 has been studied widely as models for describing  
 various  physical phenomena ranging from Langmuir waves in plasmas to signal propagation in optical fibers \cite{SS, KMM, Agrawal}. 
Starting with  seminal works
\cite{LRS} by Lebowitz, Rose, and Speer
and 
\cite{BO94, BO96} by Bourgain, 
the study of the equation \eqref{NLS1}
 from the viewpoint of the (non-)equilibrium statistical mechanics 
 has received extensive  attention;
 see, for example, 
 \cite{BO97,Tzv1,  Tzv2, OQV, LMW, BBulut, 
CFL, DNY2, FOW}.
See also \cite{BOP4} for a survey on the subject, more from the dynamical point of view.

\noi

In the seminal work \cite{LRS}, 
Lebowitz, Rose, and Speer initiated the study 
on the construction of the focusing Gibbs measure
$\rho$ in \eqref{Gibbs1} (and \eqref{Gibbs1a}), 
considering the case $d = s = 1$.
In this case, 
the Gaussian measure $\mu = \mu_{1, 1}$ in \eqref{gauss1} corresponds to the massless
Gaussian free field on $\T$, 
restricted to mean-zero functions, 
and the Gaussian random Fourier series in \eqref{series1}
corresponds to the mean-zero Brownian loop on~$\T$.
In the focusing case, the interaction potential 
$\frac{1}{p} \int_{\mathbb{T}^d} |u|^p dx$ in \eqref{Gibbs1}
is unbounded from above.
In particular,  due to its super-Gaussian growth, 
the density in \eqref{Gibbs1}
is never integrable with respect to the Gaussian measure $\mu$.
Namely, as it is written, the Gibbs measure $\rho$ in \eqref{Gibbs1}
is not normalizable to be a probability measure.
In \cite{LRS}, 
Lebowitz, Rose, and Speer proposed
to instead consider the focusing Gibbs measure 
$\rho = \rho_{d, s, p, K}$
with an $L^2$-cutoff:
\begin{align}
d\rho (u) = Z^{-1}
\ind_{\{\|u\|_{L^2(\mathbb{T}^d)}\le K\}}
\exp \bigg(\frac{1}{p} \int_{\mathbb{T}^d} |u|^p dx \bigg) d\mu (u)
\label{Gibbs2}
\end{align}

\noi
and showed that 
the partition function $Z = Z_{p, K}$
defined by 
\[
Z = Z_{p. K} = \E_\mu \bigg[\ind_{\{\|u\|_{L^2(\mathbb{T}^d)}\le K\}}
\exp \bigg(\frac{1}{p} \int_{\mathbb{T}^d} |u|^p dx \bigg) \bigg]\]

\noi
satisfies
the following dichotomy
for $d = 1$:\footnote{As pointed out by 
Carlen, Fr\"ohlich, and Lebowitz 
\cite[p.\,315]{CFL}, 
there is a gap 
in the proof of \cite[Theorem~2.2]{LRS}.
More precisely, 
 the proof in \cite{LRS} seems to apply only to the case, where the expectation in the definition of $Z_{p,K}$ is taken with respect to a standard (``free'') Brownian motion started at 0, rather than the 
Brownian loop~\eqref{series1} with $s = 1$.} 

\smallskip 

\begin{itemize}
\item[(i)] 
(subcritical case $2 < p < 6$).
We have 
 $Z_{p, K} < \infty $  for any $L^2$-cutoff size $K > 0$,

\smallskip

\item[(ii)] 
(supercritical case $p > 6$).
We have  $Z_{p, K} = \infty$  for any $L^2$-cutoff size $K > 0$.

\end{itemize}

\smallskip

\noi
Furthermore, in the 
critical case ($p = 6$), they showed that  
\[
Z_{6, K}<\infty \ \text{ for } K<\|Q\|_{L^2(\R)}
\qquad
\text{and}
\qquad
Z_{6, K}=\infty \ \text{ for } K>\|Q\|_{L^2(\R)}, 
\]

\noi
where $Q$ is 
 the \textup{(}unique\footnote{Up to the symmetries.}\textup{)} optimizer for the Gagliardo-Nirenberg-Sobolev (GNS) inequality 
on $\R$:
\begin{equation*}
\|u\|_{L^6(\R)}^6 \le C_{\GNS}\|\dx u\|^{2}_{L^2(\R)}
\|u\|^{4}_{L^2(\R)}.
\end{equation*}

\noi
See 
\eqref{GNS1} for the general fractional GNS inequality on $\R^d$, 
which plays a crucial role in the critical case in this paper.
See \cite{BO94, OST1, LW22}
for alternative proofs of (some of) these results.
In a recent work~\cite{OST1}, 
Sosoe, Tolomeo, and the fourth author
revisited this study 
in the critical case ($p = 6$)
and 
proved 
$Z_{6, K} < \infty$
even at the critical $L^2$-threshold $K = \|Q\|_{L^2(\R)}$, 
thus answering an open question posed by Lebowitz, Rose, and Speer
\cite{LRS}
and completing the picture
when $d = s = 1$.
See also a recent work \cite{HN},
where the authors
established
``intermediate integrability''
(stronger than $L^1(d\mu)$ but weaker than 
$ L^p(d\mu)$, $p > 1$)
for the case $K = \|Q\|_{L^2(\R)}$.
In particular, from \cite{LRS, BO94, OST1}, we have
 the following phase transition for the partition
function~$Z_{6, K}$ in terms of the $L^2$-cutoff size $K$:
\begin{align}
Z_{6, K}<\infty \ \text{ for } K\le \|Q\|_{L^2(\R)}
\qquad
\text{and}
\qquad
Z_{6, K}=\infty \ \text{ for } K>\|Q\|_{L^2(\R)}.
\label{phase1}
\end{align}

\noi
We also mention the work \cite{OQ}
which answered another open question by 
Lebowitz, Rose, and Speer
\cite{LRS}
on the construction of the focusing Gibbs measure
restricted to a prescribed value of the $L^2$-norm
(and the momentum).

Our main goal in this paper is to revisit the divergence results
in \cite{LRS, OST1}.
More precisely, 
by following the approach introduced in the recent works \cite{OST, GOTT}
by the first and fourth authors with their collaborators, 
we 
establish  precise  divergence rates
with optimal constants,  as we remove regularization,
in both (i)~the critical case ($p= 6$)
with 
$K>\|Q\|_{L^2(\R)}$
 and (ii)~the supercritical case ($p > 6$).
See Subsection~\ref{SUBSEC:1.2}
for a further discussion; see also Remark \ref{REM:1}.

\begin{remark}\rm
(i) The condition $s > \frac d2$
guarantees that a typical element under the Gaussian measure $\mu_{d, s}$
in \eqref{gauss1}
is  a function (namely, non-singular).
We note that when $s = 1$, 
the problem is non-singular only for 
$d = 1$.

When 
 $s \leq \frac{d}{2}$, 
a typical element in the support of $\mu_{d,s}$ in \eqref{gauss1} is merely a distribution
on $\T^d$ of negative regularity, 
and thus a 
renormalization on the interaction potential $\frac{1}{p} \int_{\mathbb{T}^d} |u|^p dx$
in \eqref{Gibbs2}
is required for constructing the focusing Gibbs measure $\rho$;
see
 \cite{BS96, OOT, OST, OOT21, GOTT}
 and the references therein for the defocusing case.
In particular, in a series
of recent works \cite{OST1, OOT, OST, OOT21, GOTT}, 
Tolomeo and the fourth author with their coauthors
completed the research program, 
initiated by Lebowitz, Rose, and Speer \cite{LRS}
and Bourgain \cite{BO94}, 
on the (non-)construction of the focusing Gibbs measures
on $\T^d$
(with $s = 1$, where the base Gaussian measure
$\mu$ is given by the Gaussian free field on $\T^d$)
for any dimension $d$ and any power $p$.
See also \cite{Nagoji, Nikov}.
We also mention recent works
\cite{RSTW22, 
DRTW, GOW}
on the (non-)construction of focusing Gibbs measures on $\R^d$
with trapping potentials.

\smallskip

\noi
(ii)
In 
\cite{CK12}, the authors
studied the focusing Gibbs measure of type \eqref{Gibbs2}
with $s = 1$ and $p = 4$ on $\T^d$ for $d \ge 3$
via the 
 physical space lattice approximation,
 and established a precise divergence rate
 with a sharp constant
 as the mesh size tends to $0$.
We point out that the divergence observed in~\cite{CK12}
is of different nature than those observed in \cite{OST1, OOT, OST, OOT21}
in the  sense described below.
In~\cite{CK12}, the authors considered the regime $s = 1 \le \frac d2$
such that 
a  continuum limit (even at the level of the base Gaussian measure)
lives on distributions and thus a renormalization
is required. 
However, there was {\it no} renormalization employed
in~\cite{CK12} and thus (rather trivial) divergence emerged 
(whose rate was studied in \cite{CK12}).
Furthermore, in~\cite{OOT21}, 
Okamoto, Tolomeo, and the fourth author proved that 
the $\Phi^3_3$-measure (i.e.~$d = 3$, $s = 1$, and $p = 3$, restricted
to the real-valued setting) is critical, establishing a phase transition.
As a result,  
we expect that the discrete focusing $\Phi^4_3$-model  considered
in~\cite{CK12} does {\it not} have any reasonable continuum limit
even if one introduces a renormalization.
In view of the discussion  above, 
it would be of interest to study 
a divergence rate (as the mesh size tends to~$0$) of 
the discrete focusing $\Phi^4_3$-model
endowed with a proper renormalization.

\end{remark}

Next, let us briefly discuss the 
case $s \ne 1$.
In this case, the Gibbs measure $\rho$ in \eqref{Gibbs1}
can be written in the form
 \eqref{Gibbs1a}, 
where  the Hamiltonian functional $H(u)$ is now given by 
\begin{align}
H(u) = \frac 12 \int_{\T^d} |D^s u|^2 dx - \frac 1p \int_{\T^d} |u|^p dx.
\label{Hamil2}
\end{align}

\noi
One of the most 
important examples of Hamiltonian PDEs generated by the energy functional 
$H(u)$ in \eqref{Hamil2} 
is the following fractional NLS on $\T^d$:
\begin{align}
i\partial_t u - D^{2s} u  + |u|^{p-2 }u = 0.
\label{NLS2}
\end{align}

\noi
When $s = 2$, \eqref{NLS2}
corresponds to the biharmonic NLS
studied in  \cite{OTz, OSTz, OTW}, 
whereas it corresponds to the 
nonlinear half-wave equation
studied in \cite{Poco} 
when $s=\frac12$;
see recent works \cite{ST20,ST21,LW23, FT}
on  the general fractional NLS.
See also \cite{FO23, DNY3, CLL} for  recent developments of this line of research.
We also refer readers to 
\cite[Subsection 1.2]{OST}
for other examples of  dynamical problems
associated with the Hamiltonian $H(u)$ in \eqref{Hamil2}.

In \cite{LW22}, 
the third and fifth authors studied 
the construction of the focusing Gibbs measure~$\rho$ in \eqref{Gibbs2}
 with an $L^2$-cutoff.
In particular, by adapting the arguments in \cite{LRS, OST1}, 
they   proved 
the following dichotomy:
\smallskip 

\begin{itemize}
\item[(i)] 
(subcritical case $2 < p < \frac{4s}{d} + 2$).
We have 
 $Z_{p, K} < \infty $  for any $L^2$-cutoff size $K > 0$,

\smallskip

\item[(ii)] 
(supercritical case $p > \frac{4s}{d} + 2$).
We have  $Z_{p, K} = \infty$  for any $L^2$-cutoff size $K > 0$.

\end{itemize}

\smallskip

\noi
Furthermore, in the 
critical case ($p = \frac{4s}{d} + 2$), they showed that  
\[
Z_{\frac{4s}{d} + 2, K}<\infty \ \text{ for } K<\|Q\|_{L^2(\R^d)}
\qquad
\text{and}
\qquad
Z_{\frac{4s}{d} + 2, K}=\infty \ \text{ for } K>\|Q\|_{L^2(\R^d)}, 
\]

\noi
where $Q$ is 
 the 
 optimizer for the fractional Gagliardo-Nirenberg-Sobolev inequality 
on $\R^d$: see~\eqref{GNS1} 
and  Proposition \ref{LEM:GNS}. 
While we expect that it is possible to adapt
the argument in~\cite{OST1} to study 
normalizability
at the critical $L^2$-threshold $K = \|Q\|_{L^2(\R^d)}$, 
it remains open in the general setting
($s > \frac d2$, $s \ne 1$, and 
$p = \frac{4s}{d} + 2$).

\subsection{Main results}
\label{SUBSEC:1.2}

 Given  $N\in \N$, we define the  frequency projector $\P_{N}$  
 onto the frequencies $\{ |n|\le N\}$ by 
\begin{align}
\P_{N} f = \sum_{ |n| \leq N}  \widehat f (n)  e^{2\pi in\cdot x}
\label{P1}
\end{align}
\noi
for a function $f$  on $\mathbb{T}^d$. 
Then, 
we define  the truncated  Gibbs measure $\rho_N = \rho_{d,s, p,K,N}$ by
setting
\begin{align}
\begin{split}
d \rho_N (u) 
 = Z_{K,N}^{-1} 
\ind_{\{\|\P_{N} u\|_{L^2(\mathbb{T}^d)}\le K\}}\exp 
\bigg({\frac 1 p\int_{\mathbb{T}^d}  | \P_{N} u|^p   dx} \bigg) d \mu (u), 
\end{split}
\label{GibbsN}
\end{align}

\noi
where 
$Z_{K, N} = Z_{d, s, p, K, N}$
denotes the   partition function for $\rho_N$,  given by 
\begin{align}
Z_{K, N} = 
\mathbb{E}_{\mu}\bigg[
\ind_{\{ \| \P_{N} u \|_{L^2 (\mathbb{T}^d)}\leq K \}} \exp\bigg( \frac 1 p\int_{\mathbb{T}^d}  | \P_{N} u|^p   dx \bigg)\bigg].
\label{Z1} 
\end{align}

We now state our main results.
The first result is on the $L^2$-critical case.

\begin{theorem}[critical case]\label{THM:1}
Let $d,  s \in \N$ such that   $s>\frac{d}{2}$
and  $p=\frac{4s}{d}+2$.
Let  $Q$ 
 the 
 optimizer for the fractional Gagliardo-Nirenberg-Sobolev inequality 
\eqref{GNS1}
on $\R^d$.
Then, given any   $K> \|Q\|_{L^2 (\R^d)}$, 
we have 
\begin{align}
\begin{split}
\log Z_{K, N} 
& = 
\CC_{K} N^{2s}+o(N^{2s})\\
& = 
\CC_{K} N^{\frac{dp}{2}-d}+o(N^{\frac{dp}{2}-d}),    
\end{split}
\label{A1}
\end{align}

\noi
as $N \to \infty$.
Here, 
 $\CC_K = \CC_{K}(d, s, p, K) > 0$ is 
given by 
\begin{align}
\CC_K= - \inf_{\substack{\|u\|_{L^2(\R^d)} = K\\ 
u = \P u}} H_{\R^d}(u), 
\label{A2}
\end{align}

\noi
where $H_{\R^d}(u)$ is the Hamiltonian functional on $\R^d$\textup{:}
\begin{align}
H_{\mathbb{R}^d} (u ) = \frac{1}{2} \int_{\R^d} |D^s u|^2 dx
- 
\frac{1}{p} \int_{\R^d} |u|^p dx,
\label{Hamil3}
\end{align}

\noi
and $\P$ denotes the ball multiplier given by 
\begin{align}
\F_{\R^d}(\P f)(\xi)=\ind_{\left\{\xi|\le 1\right\}}\widehat{f}(\xi).
\label{ball0}
\end{align}

\end{theorem}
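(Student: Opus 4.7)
Following the approach of \cite{OST, GOTT}, I would establish matching upper and lower bounds of the form $\log Z_{K,N} = (\CC_K + o(1)) N^{2s}$ by reducing to the variational problem on $\R^d$ defining $\CC_K$. The central mechanism is the $L^2$-critical rescaling $u(x) \mapsto u_N(x) := N^{d/2} u(Nx)$. Since $p = \tfrac{4s}{d} + 2$ (so $\tfrac{dp}{2} - d = 2s$), this map preserves $\|\cdot\|_{L^2}$ while multiplying both $\int |\cdot|^p\,dx$ and $\|\cdot\|_{\dot H^s}^2$ by $N^{2s}$; it also sends functions with continuous Fourier support in $\{|\xi|\le 1\}$ (i.e.\ $\P u = u$) to functions on $\T^d$ with discrete Fourier support in $\{|n| \le N\}$ (i.e.\ $\P_N u_N = u_N$). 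Consequently, for any such $u$,
\begin{align*}
\tfrac1p \int_{\T^d} |\P_N u_N|^p \, dx - \tfrac12 \|u_N\|_{\dot H^s(\T^d)}^2 = -N^{2s} H_{\R^d}(u) + o(N^{2s}).
\end{align*}

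\textbf{Lower bound.}
Fix $\eps, \delta > 0$ and pick a near-minimizer $u^\ast \in \dot H^s(\R^d)$ for $\CC_K$ with $\P u^\ast = u^\ast$. Rescale by $1 - \delta$ and truncate in physical space so that $\|u^\ast\|_{L^2(\R^d)} = K - \eta$ for some slack $\eta > 0$, $u^\ast_N$ is supported in a fundamental domain of $\T^d$ (for $N$ large), and $-H_{\R^d}(u^\ast) \ge \CC_K - \eps$. Perform a Cameron--Martin shift $u \mapsto u + u^\ast_N$ in \eqref{Z1} and restrict the expectation to the event $E = \{\|u\|_{L^2(\T^d)} \le \eta/2\}$. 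Because $s > d/2$, $u \sim \mu$ is almost surely an $L^2$-function, so $\mu(E) \ge c_\eta > 0$. On $E$ the $L^2$-indicator holds; Young's inequality gives $\int_{\T^d} |\P_N(u + u^\ast_N)|^p \, dx \ge (1 - O(\delta)) N^{2s} \|u^\ast\|_{L^p(\R^d)}^p - O(1)$; and the Cameron--Martin pairing $\langle u, u^\ast_N \rangle_{\dot H^s}$, being a centered Gaussian of variance $O(N^{2s})$, is $o(N^{2s})$ with probability $1 - o(1)$. Combining these yields $\log Z_{K,N} \ge (\CC_K - O(\eps, \delta)) N^{2s} - o(N^{2s})$, and $\eps, \delta \to 0$ closes the bound.

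\textbf{Upper bound and main obstacle.}
Apply the Bou\'e--Dupuis variational representation
\begin{align*}
\log Z_{K, N} = \sup_{\Theta} \E \Big[ \tfrac1p \int_{\T^d} |\P_N(W + I(\Theta))|^p \, dx - \tfrac12 \int_0^1 \|\Theta(t)\|_{\dot H^s}^2 \, dt \Big],
\end{align*}
where $\Law(W) = \mu$ and $\Theta$ runs over adapted drifts satisfying $\|\P_N(W + I(\Theta))\|_{L^2} \le K$ a.s. Jensen's inequality $\|I(\Theta)\|_{\dot H^s}^2 \le \int_0^1 \|\Theta(t)\|_{\dot H^s}^2 \, dt$, the reduction $V = \P_N I(\Theta)$, the elementary bound $(a+b)^p \le (1+\delta)a^p + C_\delta b^p$, and the probabilistic controls $\|\P_N W\|_{L^p}, \|\P_N W\|_{L^2} = O(1)$ (valid for $s > d/2$, uniformly in $N$) reduce the problem to the deterministic variational problem
\begin{align*}
\Pi_N := \sup\Big\{ \tfrac{1+\delta}{p} \|v\|_{L^p(\T^d)}^p - \tfrac12 \|v\|_{\dot H^s(\T^d)}^2 : \P_N v = v,\ \|v\|_{L^2(\T^d)} \le K + O(1) \Big\}.
\end{align*}
The inverse rescaling $\widetilde v(y) = N^{-d/2} v(y/N)$ on the enlarged torus $\T_N^d := N \T^d$ converts $\Pi_N / N^{2s}$ into an analogous supremum on $\T_N^d$ with discrete Fourier support $\{|k| \le 1\}$. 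The main obstacle is the continuum passage $N \to \infty$: one must verify that this discrete variational problem converges to its $\R^d$ analogue (with value $\CC_K$). This requires ruling out loss of compactness for $L^2$-critical sequences in $\dot H^s(\R^d)$, which I would handle via profile decomposition together with the rigidity of optimizers for the fractional Gagliardo--Nirenberg--Sobolev inequality \eqref{GNS1}. Once this is done, $\Pi_N \le (\CC_K + o(1)) N^{2s}$ and the upper bound follows after sending $\delta \to 0$.
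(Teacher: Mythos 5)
Your lower bound via Cameron--Martin shift is a genuinely different route from the paper's, which constructs a Bou\'e--Dupuis drift of the form $\Theta_N=-\zeta_N+\alpha F_N$ (with $\zeta_N$ from Lemma~\ref{LEM:Z} and $F_N$ a periodized rescaled profile) and then controls three explicit error terms $B_1,B_2,B_3$. The CM approach is plausible, but as written it has an internal contradiction: you require $u^*$ to satisfy both $\P u^*=u^*$ (Fourier support in $B_1$) and be truncated in physical space so that $u^*_N$ is supported in a fundamental domain. By Paley--Wiener, a function with compactly supported Fourier transform is entire and cannot have compact physical support. The correct replacement, which the paper uses, is to take $f\in\mathcal{A}$ Schwartz (so both tails decay rapidly) and \emph{periodize}, i.e.\ $F_N=f_{N^{-1}}^{\per}$ as in \eqref{per2}--\eqref{per3}; Lemma~\ref{LEM:sol1} then delivers \eqref{low2}--\eqref{low3}. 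Also, on the event $E=\{\|u\|_{L^2}\le\eta/2\}$ the Bernstein bound gives $\|\P_N u\|_{L^p}^p\lesssim\eta^pN^{2s}$, not $O(1)$; this is harmless since $\eta\to 0$, but it is a leading-order error, not a constant.

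The upper bound has a real gap. After invoking Bou\'e--Dupuis, you split $\P_N(W+I(\Theta))$ into $\P_N W$ and $V=\P_N I(\Theta)$, apply $(a+b)^p\le(1+\delta)a^p+C_\delta b^p$, and end up with the $L^2$-constraint $\|v\|_{L^2}\le K+O(1)$. This $O(1)$ is the (random, not a.s.\ bounded) quantity $\|\P_N W\|_{L^2}$, and there is no mechanism to send it to zero. Since $\CC_K=\sup_{\|u\|_{L^2}=1,\,u=\P u}\big(\tfrac{K^p}{p}\|u\|_{L^p}^p-\tfrac{K^2}{2}\|u\|_{\dot H^s}^2\big)$ is strictly increasing in $K$ (for $K>\|Q\|_{L^2}$), the constraint enlargement yields $\CC_{K+C}>\CC_K$ and the sharp constant is lost. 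The paper avoids this entirely by never separating $Y_N$ from $\Theta_N$: it bounds $\|\Theta_N\|_{\dot H^s}^2\ge(1-N^{-\eps})\|Y_N+\Theta_N\|_{\dot H^s}^2-cN^{d+\eps}$ (see~\eqref{up2}--\eqref{up3}), changes variables to $u=Y_N+\Theta_N$, and keeps the indicator $\ind_{\{\|Y_N+\Theta_N\|_{L^2}\le K\}}$ attached so that the deterministic supremum has constraint $\|u\|_{L^2}\le K$ exactly. The error $cN^{d+\eps}$ is $o(N^{2s})$ precisely because $2s>d$, which is why the change of variables loses nothing.

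Finally, you correctly identify the continuum passage from the torus variational problem to the $\R^d$ one (value $\CC_K$) as the crux, but you leave it to ``profile decomposition plus rigidity of GNS optimizers,'' which is a substantial piece of machinery that you do not carry out. The paper instead gives a compactness-free, explicit construction: take the torus optimizer $f_N$ (which exists because the constraint set is finite-dimensional), translate it so that its mass avoids the boundary of $[-\tfrac12,\tfrac12)^d$ (pigeonhole over $\eps$-cubes, \eqref{chi2}--\eqref{chi2a}), multiply by a smooth bump $\chi_\eps$, replace $\chi_\eps$ by a band-limited $\chi_{\eps,M}$, rescale by $N$ and then by $(1+2M/N)^{-1}$ to land in the class $\{u=\P u,\ \|u\|_{L^2}=1\}$, and finally plug into $E_K$ to compare with $\CC_K$. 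The trade-off is that controlling $\|\chi_{\eps,M}^2 f_N\|_{\dot H^s}$ requires the Leibniz rule and locality of $\partial^\alpha$, whence the restriction $s\in\N$ (see Remark~\ref{REM:up1}). A profile-decomposition route might, if carried out, remove that restriction, but as it stands you have not verified it, and the constraint-enlargement issue above would sink the argument even if you had.
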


\medskip

We present a proof of Theorem \ref{THM:1} in Section \ref{SEC:3}, 
where we prove the following upper bound:
\begin{align}
\limsup_{N\to \infty}N^{-2s} \log Z_{K, N}\le \CC_K
\label{A2a}
\end{align}

\noi
in Subsection \ref{SUBSEC:3.1}, 
while the lower bound 
\begin{align}
\liminf_{N\to \infty}N^{-2s} \log Z_{K, N}\ge \CC_K
\label{A2b}
\end{align}

\noi
is established 
in Subsection \ref{SUBSEC:3.2}. 

\begin{remark}\label{REM:1}\rm

(i) 
We point out that the assumption  $s \in \N$
is needed only for establishing the upper bound \eqref{A2a}
(see Remark \ref{REM:up1}), 
whereas we prove the lower bound \eqref{A2b}
for general $s > \frac d2$.
While we expect that the upper bound \eqref{up0}
also holds for general $s > \frac d2$, 
we do not pursue this issue
in the present paper
for conciseness of the presentation.
Lastly, we point out that 
even with the restriction $s \in \N$, 
Theorem \ref{THM:1} covers
the most important case $d = s = 1$
(with $p = 6$), 
studied intensively in \cite{LRS, BO94, OST1}, 
providing the following refined description
of  the  phase transition for the partition
function~$Z_{6, K}$ in terms of the $L^2$-cutoff size $K$:
\begin{align*}
\lim_{N \to \infty} N^{-2s}\log Z_{K,N}
= 
\begin{cases}
0 & \text{for } K \leq \|Q\|_{L^2(\R)}, \\
\CC_K & \text{for } K > \|Q\|_{L^2(\R)}, 
\end{cases}
\end{align*}

\noi
as compared to 
 \eqref{phase1}.

\medskip

\noi
(ii)  When    $K \le  \|Q\|_{L^2 (\R^d)}$, 
it is easy to see, 
using the  
fractional Gagliardo-Nirenberg-Sobolev inequality~\eqref{GNS1}, 
 that  
$\CC_K$ in \eqref{A2} is non-positive
(namely, the Hamiltonian is non-negative).
In this case, 
 the formula \eqref{A1}
does not provide any information,
which is indeed consistent with 
the fact that $\sup_{N \in \N} Z_{K. N}  < \infty$, 
at least for    $K <  \|Q\|_{L^2 (\R^d)}$
(also with an equality
for $d = s = 1$).

\end{remark}

Next, we state our second result on  the $L^2$-supercritical case.

\begin{theorem}[supercritical case]\label{THM:2}

Let $d \in \N$, $s > \frac d2$, and 
 $p>\frac{4s}{d}+2$.
 Then, given any $K > 0$, 
we have 
\begin{align*}
\log Z_{K, N}= C_B \frac{K^p}{p} N^{\frac{dp}{2}-d}+o(N^{\frac{dp}{2}-d}), 
\end{align*}

\noi
as $N\to \infty$.
Here,  $C_B$ is the optimal constant for the following Bernstein inequality on $\R^d$\textup{:}
\begin{align}\label{A4}
\|\P f\|^p_{L^{p}(\R^d)}\le C_B\|f\|^p_{L^{2}(\R^d)},
\end{align}

\noi
where  $\P$ is  the ball multiplier defined in  \eqref{ball0}.

\end{theorem}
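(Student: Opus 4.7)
The plan is to prove the matching asymptotic upper and lower bounds
\[
\limsup_{N\to\infty} N^{-(\frac{dp}{2}-d)}\log Z_{K,N} \le \frac{C_B K^p}{p} \le \liminf_{N\to\infty} N^{-(\frac{dp}{2}-d)}\log Z_{K,N}.
\]
The argument is structurally analogous to the proof of Theorem \ref{THM:1}, but is genuinely simpler: since $p>\frac{4s}{d}+2$, one has $\frac{dp}{2}-d>2s$, so the $\dot H^s$-cost of any Cameron--Martin shift ($\sim N^{2s}$) and the contribution of the Gaussian noise ($\sim 1$) are strictly lower order than the anticipated $L^p$-gain ($\sim N^{\frac{dp}{2}-d}$). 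No delicate balance between quadratic and $L^p$ terms is required, and the sharp constant $C_B$ emerges directly from the Bernstein inequality \eqref{A4} on $\R^d$.

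\emph{Upper bound.} For $u$ with $\P_N u=u$, let $C_B^{(N)}$ denote the sharp constant in the periodic Bernstein inequality $\|u\|_{L^p(\T^d)}^p\le C_B^{(N)}N^{\frac{dp}{2}-d}\|u\|_{L^2(\T^d)}^p$. The key preliminary step is $\limsup_{N\to\infty} C_B^{(N)}\le C_B$. Given a near-extremal sequence $u_N$ on $\T^d$ with $\|u_N\|_{L^2}=1$, the rescaled function $v_N(y)\deff N^{-d/2}u_N(y/N)$ is a trigonometric polynomial on the dilated torus $N\T^d$ whose Fourier modes sit on $\tfrac{1}{N}\Z^d\cap\{|\xi|\le 1\}$, with $\|v_N\|_{L^2(N\T^d)}=1$ and $\|v_N\|_{L^p(N\T^d)}^p=N^{-(\frac{dp}{2}-d)}\|u_N\|_{L^p(\T^d)}^p$. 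A (translation-normalized) concentration-compactness extraction produces a subsequential weak limit $v\in L^2(\R^d)$ with $\widehat v$ supported in $\{|\xi|\le 1\}$ and $\|v\|_{L^p(\R^d)}^p/\|v\|_{L^2(\R^d)}^p\ge \limsup_N C_B^{(N)}$; together with \eqref{A4} this forces $\limsup_N C_B^{(N)}\le C_B$. On the event $\{\|\P_N u\|_{L^2}\le K\}$ we then obtain $\frac{1}{p}\int_{\T^d}|\P_N u|^p dx \le \frac{C_B^{(N)}K^p}{p}N^{\frac{dp}{2}-d}$, so $\log Z_{K,N}\le \frac{C_B^{(N)}K^p}{p}N^{\frac{dp}{2}-d}$.

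\emph{Lower bound.} Fix $\eps\in(0,\tfrac12)$ and pick $\phi\in C_c^\infty(\R^d)$ with $\widehat\phi$ supported in $\{|\xi|<1\}$, $\|\phi\|_{L^2(\R^d)}=(1-\eps)K$, and $\|\phi\|_{L^p(\R^d)}^p\ge (C_B-\eps)\|\phi\|_{L^2(\R^d)}^p$; such $\phi$ exists by definition of $C_B$. Define $\phi_N(x)\deff N^{d/2}\phi(Nx)$, viewed as a function on $\T^d$ (well-defined for $N$ large since $\phi$ has compact support). Then $\P_N\phi_N=\phi_N$, $\|\phi_N\|_{L^2(\T^d)}=(1-\eps)K$, $\|\phi_N\|_{L^p(\T^d)}^p=N^{\frac{dp}{2}-d}\|\phi\|_{L^p(\R^d)}^p$, and $\|\phi_N\|_{\dot H^s}=O(N^s)$. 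Applying the Cameron--Martin shift $u\mapsto u+\phi_N$, we rewrite
\[
Z_{K,N} = \int \ind_{\{\|\P_N(u+\phi_N)\|_{L^2}\le K\}}\exp\!\bigg(\tfrac{1}{p}\|\P_N(u+\phi_N)\|_{L^p}^p - \langle \phi_N,u\rangle_{\dot H^s} - \tfrac{1}{2}\|\phi_N\|_{\dot H^s}^2\bigg) d\mu(u).
\]
We restrict the integration to the event
\[
E_\eps = \big\{\|u\|_{L^2(\T^d)}\le \tfrac{\eps K}{2}\big\}\cap\big\{|\langle\phi_N,u\rangle_{\dot H^s}|\le \|\phi_N\|_{\dot H^s}\big\}\cap\big\{\|\P_N u\|_{L^p(\T^d)}\le M\big\},
\]
with $M$ a large constant chosen independently of $N$. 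On $E_\eps$ the triangle inequality gives $\|\P_N(u+\phi_N)\|_{L^2}\le \tfrac{\eps K}{2}+(1-\eps)K<K$, so the indicator equals $1$. Moreover $\mu(E_\eps)\ge c_\eps>0$ uniformly in $N$: the small-ball event $\{\|u\|_{L^2}\le \eps K/2\}$ has positive $\mu$-probability independent of $N$ (since $\|\P_N u\|_{L^2}$ increases to $\|u\|_{L^2}$ and $\mu$ is supported on $L^2(\T^d)$ for $s>\tfrac d2$); the Gaussian pairing $\langle\phi_N,u\rangle_{\dot H^s}$ is centered with variance $\|\phi_N\|_{\dot H^s}^2$; and $\E\|\P_N u\|_{L^p}^p$ is uniformly bounded, again by $s>\tfrac d2$. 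The elementary pointwise bound $|a+b|^p\ge |a|^p - C_p(|a|^{p-1}|b|+|b|^p)$ combined with H\"older yields $\|\P_N(u+\phi_N)\|_{L^p}^p\ge (1-o(1))\|\phi_N\|_{L^p}^p$ on $E_\eps$ as $N\to\infty$. Collecting the estimates,
\[
\log Z_{K,N}\ge \frac{(C_B-\eps)((1-\eps)K)^p}{p}N^{\frac{dp}{2}-d}(1-o(1)) - O(N^{2s}) - O(1);
\]
dividing by $N^{\frac{dp}{2}-d}$, sending $N\to\infty$, and then $\eps\to 0$ completes the lower bound.

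\emph{Main obstacle.} The two delicate ingredients are (i) establishing $C_B^{(N)}\to C_B$, which requires a careful concentration-compactness plus rescaling argument passing trigonometric polynomials on $\T^d$ with growing frequency support to Paley--Wiener functions on $\R^d$; and (ii) dealing with the sharp $L^2$-indicator in the lower bound, in the presence of the non-vanishing size $\E\|\P_N u\|_{L^2}^2=O(1)$ of the Gaussian noise. The latter is circumvented by using a drift of $L^2$-norm $(1-\eps)K$ (strictly below the constraint) and restricting to the positive-probability small-ball event $\{\|u\|_{L^2}\le \eps K/2\}$, whose probability is bounded below by the (positive) small-ball probability of the full limiting Gaussian on $L^2$; the resulting suboptimal factor $(1-\eps)^p$ is removed by taking $\eps\to 0$ after $N\to\infty$.
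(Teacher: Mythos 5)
Your approach is genuinely different from the paper's and, once repaired, works; in the supercritical range the paper also uses the Bou\'e--Dupuis formula for both bounds, but you observe correctly that the upper bound is already a pointwise estimate of the integrand (no variational formula needed), and for the lower bound you replace the Bou\'e--Dupuis drift of \eqref{low2a} (which kills $Y_N$ via $\zeta_N$ and pushes $\alpha\to K$) by a plain Cameron--Martin shift combined with restriction to a small-ball event of uniformly positive probability, trading exact constants for an $\eps\to 0$ limit taken after $N\to\infty$. This is cleaner than the paper's treatment and is a legitimate alternative: the $\dot H^s$-cost $O(N^{2s})$ and the noise are genuinely lower-order relative to the gain $N^{\frac{dp}{2}-d}$ because of the strict inequality $2s<\frac{dp}{2}-d$.

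There is, however, one genuine error in the lower bound construction: you ask for $\phi\in C_c^\infty(\R^d)$ \emph{and} $\supp\widehat\phi\subset\{|\xi|<1\}$, but by Paley--Wiener no nonzero function has compactly supported physical and Fourier support simultaneously. Consequently the step ``$\phi_N(x)=N^{d/2}\phi(Nx)$, viewed as a function on $\T^d$, with $\P_N\phi_N=\phi_N$'' fails: if $\phi$ is compactly supported, $\widehat{\phi_N}$ has full Fourier support and $\P_N\phi_N\ne\phi_N$; if $\phi$ is band-limited (hence Schwartz but not compactly supported), then $N^{d/2}\phi(Nx)$ does not descend to $\T^d$. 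The fix is precisely what the paper does in \eqref{low1} and Lemma~\ref{LEM:sol1}: take $\phi\in\S(\R^d)$ with $\supp\widehat\phi\subset B_1$ and $\widehat\phi(0)=0$, and \emph{periodize} to get $\phi_N:=\phi_{N^{-1}}^{\per}$; rapid decay of $\phi$ yields the asymptotics $\|\phi_N\|_{L^2(\T^d)}^2=(1-\eps)^2K^2+o(1)$, $\|\phi_N\|_{L^p(\T^d)}^p=N^{\frac{dp}{2}-d}\|\phi\|_{L^p(\R^d)}^p(1+o(1))$, and $\|\phi_N\|_{\dot H^s(\T^d)}^2=O(N^{2s})$, after which your argument goes through. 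A second, smaller point: the joint positivity $\mu(E_\eps)\ge c_\eps>0$ does not follow from the three individual positivity statements as written; either invoke the Gaussian correlation inequality (the three events are symmetric and convex), or, more elementarily, drop the constraint $|\langle\phi_N,u\rangle_{\dot H^s}|\le\|\phi_N\|_{\dot H^s}$ from $E_\eps$ and note that on the remaining symmetric event $E'_\eps$ one has $\int_{E'_\eps}e^{-\langle\phi_N,u\rangle_{\dot H^s}}\,d\mu=\E\big[\ind_{E'_\eps}\cosh\langle\phi_N,u\rangle_{\dot H^s}\big]\ge\mu(E'_\eps)$, and $\mu(E'_\eps)>0$ uniformly by a union bound (take $M$ large). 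Finally, your sketch of $\limsup_N C_B^{(N)}\le C_B$ via rescaling plus concentration-compactness is the right idea but is stated rather loosely; the paper cites \cite[Lemma~2.2]{OST} for this input, and you should either do the same or flesh out the extraction (translation recentering plus local $L^p$-compactness from the Paley--Wiener bound, ruling out dichotomy).
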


Theorems \ref{THM:1} and \ref{THM:2}
revisit the non-normalizable regime, 
previously studied in 
 \cite{LRS, OST1, LW22}, 
and provide precise divergence rates
with optimal constants.
In particular, 
when 
 $d = s = 1$ and $p = 6$, 
 Theorem \ref{THM:1}
provides a complete picture of 
the  phase transition of the $L^2$-critical focusing Gibbs measure
at the critical $L^2$-threshold $K = \|Q\|_{L^2(\R)}$, 
previously studied in  \cite{LRS, OST1}.

Our proofs of 
Theorems \ref{THM:1} and \ref{THM:2}
are based on the Bou\'e-Dupuis variational formula
(Lemma~\ref{LEM:var}), recently popularized by 
Barashkov and Gubinelli \cite{BG}
in the construction of the $\Phi^4_3$-measure.
In establishing the lower bounds (see  \eqref{A2b} and \eqref{sp1b}), 
we follow the approach introduced
in a series of recent works
\cite{TW23, OOT, OST, OOT21},   
where analogous
non-normalizability results were established in various settings;
see also 
\cite{LW22,  GOTT}.
The heart of this approach lies in 
constructing a specific drift $\ta = \ta_N$ 
in applying  
the Bou\'e-Dupuis variational formula (Lemma~\ref{LEM:var})
for each $N \in \N$
such that 
$\P_N \Dr (1)$ defined in \eqref{var3} looks like
\begin{align}
 \text{`$-Y_N$+ a deterministic perturbation',}
\label{A5}
\end{align}

\noi
 where $Y_N$ is as in \eqref{Y2} (see also \eqref{var2})
and 
 the perturbation term  is bounded in 
$L^2(\T^d)$ but  has a large $L^p$-norm, 
leading to the desired divergence;
see Lemma \ref{LEM:Z} for the construction of
a smooth process $\ze_N$, approximating~$Y_N$.

In \cite{OST}, 
Seong, Tolomeo, and the fourth author
refined this approach to obtain 
a precise divergence rate 
with a sharp constant
for the logarithmically correlated Gibbs measure 
with a focusing quartic interaction potential.
More precisely, 
they 
used
the periodized version of  
a (suitably dilated and frequency-truncated)\footnote{See \eqref{low1} and \eqref{sp2}.}
almost optimizer of 
Bernstein's inequality \eqref{A4} on $\R^d$
(with $p = 4$)
as the deterministic perturbation in \eqref{A5}, 
which allowed them to 
establish a precise divergence rate 
with  the sharp constant
given by 
 the optimal constant $C_B$ for Bernstein's inequality 
 \eqref{A4} (with $p = 4$).
Our proof of  
Theorem \ref{THM:2}
on 
the  supercritical case 
closely follows the argument in \cite{OST}.
In the critical case 
(Theorem \ref{THM:1}), 
the divergence occurs only 
for large $L^2$-norms (namely, $K > \|Q\|_{L^2(\R^d)}$).
Thus, 
we need to carry out more careful
analysis, 
using the sharp fractional GNS inequality \eqref{GNS1}
and its optimizer $Q$ (see Remark \ref{REM:GNS})
which plays a crucial role in establishing a key lemma
(Lemma \ref{LEM:sol2}).
Lastly, 
we point out that the ball multiplier
$\P$ defined in \eqref{ball0}
appears in both Theorems \ref{THM:1} and \ref{THM:2}
due to our use of the sharp frequency projector
$\P_N$ defined in \eqref{P1}
as regularization.
A different regularization procedure leads
to a different optimal constant but the divergence
rate should remain the same.

\begin{remark}\label{REM:mean}
\rm

Let $\wt \mu = \wt \mu_{d, s}$ be 
the {\it massive}
fractional Gaussian free field on $\T^d$
with 
the formal density: 
\begin{align}
d \wt \mu =  Z^{-1} e^{-\frac 12 \|u\|_{H^s(\T^d)}^2} d u
\label{gauss2}
\end{align}

\noi
which is nothing but   the induced probability measure under the map:
\begin{align}
\omega\in \Omega \longmapsto u^\omega(x) = 
\sum_{n \in \mathbb{Z}^d} \frac{g_n (\omega)}{ \jb{n}^s}  e^{2\pi i n\cdot x},
\label{series2}
\end{align}

\noi
Here,  $\jb{n} = (1 + 4\pi^2 |n|^2)^\frac{1}{2}$
and 
$\{ g_n \}_{n \in \Z^d}$
is a sequence of  independent standard complex-valued
Gaussian random variables.
When $d = s= 1$, 
the series in \eqref{series1}
is  the mean-zero Brownian loop, 
while the series in \eqref{series2}
corresponds to 
 the Ornstein-Uhlenbeck loop.
Then, instead of~\eqref{Gibbs2}, 
we can consider the focusing Gibbs measure 
with 
the massive fractional Gaussian free field
$\wt \mu$ in~\eqref{gauss2} as the base Gaussian measure:
\begin{align}
d\wt \rho (u) = Z^{-1}
\ind_{\{\|u\|_{L^2(\mathbb{T}^d)}\le K\}}
\exp \bigg(\frac{1}{p} \int_{\mathbb{T}^d} |u|^p dx \bigg) d\wt \mu (u)
\label{Gibbs3}
\end{align}

\noi
which, as observed in \cite{BO94},  is a more natural Gibbs measure
for the (fractional) NLS \eqref{NLS1} (and~\eqref{NLS2})
due to the lack of the conservation of the spatial mean under the dynamics.
We note that  the results in \cite{LRS, OST1}
also hold for $\wt \rho$ defined in \eqref{Gibbs3}
with the massive fractional Gaussian free field
$\wt \mu$; see \cite[Remark~1.2]{OST1}.

Let 
$\wt Z_{K, N}$ denote the partition function
for the truncated Gibbs measure associated 
with the massive fractional Gaussian free field $\wt \mu$, 
 given by
\begin{align*}
\wt Z_{K, N} = 
\E_{\wt \mu}\bigg[
\ind_{\{ \| \P_{N} u \|_{L^2 (\mathbb{T}^d)}\leq K \}} \exp\bigg( \frac 1 p\int_{\mathbb{T}^d}  | \P_{N} u|^p   dx \bigg)\bigg].
\end{align*}

\noi
Namely, 
$\wt Z_{K, N}$
is given by 
 \eqref{Z1}, 
where we replace $\mu$  by $\wt \mu$.
Then, it is easy to check that 
an analogue of Theorem~\ref{THM:2} 
in the supercritical case  holds
for $\wt Z_{K, N}$:
\begin{align*}
\log \wt Z_{K, N}= C_B \frac{K^p}{p} N^{\frac{dp}{2}-d}+o(N^{\frac{dp}{2}-d}), 
\end{align*}

\noi
as $N \to \infty$, where $C_B$ is as in \eqref{A4}.
Moreover, by examining its proof, 
we also see that 
an analogue of  Theorem \ref{THM:1}
in the critical case
also holds
for $\wt Z_{K, N}$:
\begin{align}
\log \wt Z_{K, N} 
 = 
\CC_{K} N^{2s}+o(N^{2s})
 = 
\CC_{K} N^{\frac{dp}{2}-d}+o(N^{\frac{dp}{2}-d}),    
\label{ZA2}
\end{align}

\noi
as $N \to \infty$, 
where $\CC_K$ is as in \eqref{A2}.
See Remark \ref{REM:non1} for a further discussion.

We also point out that 
Theorems \ref{THM:1} and \ref{THM:2} also hold in the real-valued setting, 
which  is, for example,  relevant to the study of the generalized KdV equation
on $\T$:
\begin{align*}
\dt u + \dx^3 u + \dx (u^{p-1}) = 0.
\end{align*}

\noi
See 
\cite{ORTh, CK} for a further discussion.

\end{remark}

\section{Notations and preliminary lemmas}

\subsection{Notations}
Let $A\les B$ denote an estimate of the form $A\leq CB$ for some constant $C>0$. We write $A\sim B$ if $A\les B$ and $B\les A$, while $A\ll B$ denotes $A\leq c B$ for some small constant $c> 0$. 
We use $C>0$ to denote various constants, which may vary line by line.

Given $r > 0$, we use $B_r \subset \R^d $ to denote the closed ball of radius $r$ centered at the origin:
\begin{align}
B_r = \{ \xi \in \R^d: |\xi|\le r\}.
\label{ball1}
\end{align}

We denote by $\Law(X)$ the law of
a random variable $X$. 
In the following, we fix $d$, $s$, and $p$,  and thus
we often drop dependence on these parameters.
For example, 
we use the following short-hand notation:
\begin{align*}
\mu = \mu_{d, s} \qquad\text{and}\qquad
Z_{K, N} = Z_{d, s, p, K, N}, 
\end{align*}

\noi
where  $\mu_{d, s}$ and $Z_{d, s, p, K, N}$ are
as  in \eqref{gauss1} and \eqref{Z1}.

We use $\ft f$ to denote the Fourier transform of a function $f$ on $\T^d$, 
while we use $\F_{\R^d}(f)$ 
to denote the Fourier transform of a function $f$ on $\R^d$.
Given $s \in \R$, we define the Riesz potential of order $-s$ 
by 
\begin{align}
D^s f (x) = \sum_{n \in \mathbb{Z}^d \backslash \{ 0 \}} 
(2\pi | n|)^s \ft f (n) e^{2\pi i n \cdot x} 
\label{Riesz1}
\end{align}

\noi
and define the homogeneous Sobolev space $\dot H^s(\T^d)$ via the norm:
\begin{align*}
\| f \|_{\dot H^{s}(\mathbb{T}^d)} = \| D^{s} f \|_{L^2(\mathbb{T}^d)}
=\bigg(\sum_{n\in \Z^{d}}(2\pi |n|)^{2s}|\widehat{f}(n)|^2\bigg)^\frac 12 .
\end{align*}

\noi
Similarly, given $ s > 0$, we define $\dot H^s(\R^d)$ by the (semi-)norm:
\begin{align*}
\|f\|_{\dot H^{s}(\R^d)}
 = \| D^{s} f \|_{L^2(\R^d)}
=\|\mathcal{F}_{\R^d}^{-1}((2\pi |\xi|)^{s}\widehat{f}(\xi))\|_{L^{2}(\R^d)}, 
\end{align*}

\noi
where we impose $\lim_{|x|\to \infty} f(x) = 0$.
We use $\S(\R^d)$ to denote the Schwartz class on $\R^d$.

Let $s \in \N$. Then, by Plancherel's identity and 
 the multinomial theorem, we have 
\begin{align}
\begin{split}
\|f\|^2_{\dot{H}^{s}(\R^d)}&=
\int_{\R^d} 
(2\pi |\xi|)^{2s}|\F_{\R^d}(f)(\xi)|^2 d\xi
=\sum_{|\alpha|=s}\binom{s}{\alpha}
\int_{\R^d} 
(2\pi \xi)^{2\al}|\F_{\R^d}(f)(\xi)|^2 d\xi\\
&=\sum_{|\alpha|=s}\binom{s}{\alpha}\|\dd^\al f\|^2_{L^{2}(\R^d)}.
\end{split}
\label{Hs4}
\end{align}

\noi
We note that 
$\dd^\al$ is a local operator, 
which plays a crucial role 
in Subsection \ref{SUBSEC:3.1}.

\subsection{Variational formulation}

In this subsection, 
we recall 
the  Bou{\'e}-Dupuis
variational formula 
(Lemma \ref{LEM:var})
which was recently popularized
by Barashkov and Gubinelli \cite{BG}
in the construction of the $\Phi^4_3$-measure.

 Let $W (t)$ denote a mean-zero cylindrical 
 Wiener process in $L^2 (\mathbb{T}^d)$:
\begin{align}
W (t) = \sum_{n \in \mathbb{Z}^d \backslash \{ 0 \}} B_n (t)
e^{2\pi i n \cdot x},
\label{W1}
\end{align}

\noi
where $\{ B_n \}_{n \in \mathbb{Z}^d \backslash \{ 0 \}}$ is a sequence of
mutually independent complex-valued Brownian motions.\footnote{By convention, 
we assume that $\text{Var}(B_n(t)) = t$.} 
Then,  define a 
Gaussian process $Y (t)$ by setting
\begin{equation}
\label{Y1} 
Y (t) = D^{- s} W (t) = \sum_{n \in \mathbb{Z}^d\backslash \{ 0 \}} \frac{B_n (t)}{(2\pi | n |)^s} e^{2\pi i n \cdot x} .
\end{equation}

%
%

\noi
In particular, we have $\Law (Y (1)) = \mu$,
where $\mu = \mu_{d, s}$ is the massless Gaussian 
free field defined in 
\eqref{gauss1} (see also \eqref{series1}).
In the following, we restrict our attention to the case $s > \frac d2$
such that $Y(1)$ is almost surely a function on $\T^d$.

Given  $N \in \N$, we set 
\begin{equation}
Y_N = \P_{N} Y (1), 
\label{Y2}
\end{equation}

\noi
where $\P_N$ is as in \eqref{P1}.
Then, we have 
\begin{align}
\label{Y3}
\E\big[\|Y_N\|^2_{\dot H^{s}(\T^d)}\big]
\sim \sum_{0<|n|\le N}1
\sim N^d.
\end{align}

\quad 
Let $\mathbb{H}_a$ be the space of drifts, consisting of
spatially mean-zero progressively 
measurable\footnote{with respect to  the filtration generated by $Y$.}
processes
in $L^2 ([0, 1] ; L^2 (\mathbb{T}^d))$, $\PP$-almost surely.
Given a drift $\dr \in \Ha$, 
define
 $\Theta$ by 
\begin{equation}
\Theta = \int_0^1 D^{- s} \theta (t) dt
\label{var3}
\end{equation}

\noi
and $\Dr_N = \P_N \Dr(1) $ as in \eqref{Y2}.
We now recall the Bou\'e-Dupuis variational formula
\cite{BD98,U14, Zhang09};
see also \cite[Appendix A]{TW23}.

\begin{lemma}
\label{LEM:var} 
 Suppose that $F: C^\infty(\T^d) \to \mathbb{R}$ is
a bounded measurable function.
Then, we have
\begin{align}\label{var2} 
\log\E \big[e^{F (Y_N)}\big] = \sup_{\theta \in\Ha} 
\E \bigg[ F (Y_N + \Theta_N) - \frac{1}{2} \int_0^1\| \theta (t) \|^2_{L^2 (\mathbb{T}^d)} d t \bigg], 
\end{align}

\noi
where 
 the expectation $\mathbb{E}=\mathbb{E}_{\PP}$ is taken with respect to
the underlying probabilistic measure $\PP$.
\end{lemma}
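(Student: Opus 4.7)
The plan is to establish both directions of \eqref{var2} using Girsanov's theorem as the main tool, together with martingale representation for the construction of a near-optimal drift. A useful simplification is that $Y_N = \P_N Y(1)$ depends only on the finitely many Brownian motions $\{B_n\}_{0<|n|\le N}$, so that the argument effectively reduces to a finite-dimensional Bou\'e-Dupuis formula, avoiding technicalities around infinite-dimensional martingale representation.

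For the inequality $\log \E[e^{F(Y_N)}] \ge \E[F(Y_N+\Dr_N) - \frac{1}{2}\int_0^1 \|\dr\|_{L^2(\T^d)}^2 dt]$ for each fixed $\dr \in \Ha$, I would form the stochastic exponential $\mathcal{E}_t = \exp(\int_0^t \langle \dr, dW\rangle - \frac{1}{2}\int_0^t \|\dr\|_{L^2}^2 ds)$, truncating $\dr$ if necessary so that Novikov's condition holds. Girsanov then provides a probability measure $\mathbb{Q}$ with $d\mathbb{Q} = \mathcal{E}_1 \, d\PP$ under which $\widetilde W(t) := W(t) - \int_0^t \dr(s) \, ds$ is a cylindrical Brownian motion; consequently the $\mathbb{Q}$-law of $Y_N$ coincides with the $\PP$-law of $Y_N + \Dr_N$. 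The Donsker-Varadhan duality $\E_{\mathbb{Q}}[G] - H(\mathbb{Q}|\PP) \le \log \E_{\PP}[e^G]$ applied with $G = F(Y_N)$ then yields
\[
\E_{\PP}[F(Y_N+\Dr_N)] - H(\mathbb{Q}|\PP) \le \log \E_{\PP}[e^{F(Y_N)}],
\]
and a direct Girsanov computation identifies $H(\mathbb{Q}|\PP) = \tfrac{1}{2}\E[\int_0^1\|\dr\|_{L^2}^2 dt]$, producing the claim after taking the supremum over $\dr$.

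For the reverse inequality, I would construct an essentially optimal drift via martingale representation. Since $F$ is bounded, $M_t := \E[e^{F(Y_N)}\mid \mathcal{F}_t]$ is a positive, uniformly bounded $\PP$-martingale. The martingale representation theorem, applied on the finite family $\{B_n\}_{0<|n|\le N}$, yields predictable processes $\psi_n$ with $M_t = M_0 + \sum_{0<|n|\le N}\int_0^t \psi_n \, dB_n$, and I would propose the candidate
\[
\dr^*(s,x) = \sum_{0<|n|\le N} \frac{\psi_n(s)}{M_s}\, e^{2\pi i n\cdot x}.
\]
Applying It\^o's formula to $\log M_t$ and evaluating at $t=1$ gives the pathwise identity
\[
F(Y_N) = \log \E[e^{F(Y_N)}] + \int_0^1 \langle \dr^*, dW\rangle - \tfrac{1}{2}\int_0^1 \|\dr^*\|_{L^2}^2 dt.
\]
Performing the Girsanov shift associated with $\dr^*$ (which transforms $F(Y_N)$ into $F(Y_N+\Dr_N^*)$ while turning the stochastic integral of $\dr^*$ against $dW$ into a mean-zero term plus the energy of $\dr^*$) and rearranging yields equality in \eqref{var2} for this particular choice of $\dr^*$.

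The hard part will be verifying that $\dr^* \in \Ha$, i.e.\ $\E[\int_0^1 \|\dr^*\|_{L^2}^2 dt] < \infty$. Because $F$ is bounded, $M$ is bounded away from $0$ and $\infty$, and the energy identity for $\log M$ then delivers the required integrability. If direct estimates prove awkward, I would localize by stopping at $\tau_R := \inf\{t : \int_0^t \|\dr^*\|_{L^2}^2 ds \ge R\}$, derive the identity with the truncated drift $\dr^*\ind_{t<\tau_R}$, and pass $R \to \infty$ by dominated convergence. These technicalities, together with the careful handling of Novikov's condition on the upper-bound side, are the standard steps worked out in \cite{BD98, U14, Zhang09}.
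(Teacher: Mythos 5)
The paper does not prove Lemma \ref{LEM:var}; it is recalled as a known result with references to \cite{BD98, U14, Zhang09} and \cite[Appendix A]{TW23}. Your sketch tracks the standard proof from those references: Girsanov plus Donsker--Varadhan relative-entropy duality for the inequality ``$\geq$'', and martingale representation with It\^o's formula on $\log M_t$ for the near-optimal drift in ``$\leq$''. The finite-dimensional reduction you observe is legitimate and useful: $Y_N$ and $\Theta_N$ see only the modes $0<|n|\le N$, while adding other frequency components to $\theta$ only increases $\int_0^1\|\theta\|^2_{L^2}\,dt$, so the supremum is unchanged by restricting to drifts supported on those modes.

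One assertion is imprecise, and it is in fact the genuinely delicate step: for an \emph{adapted} (non-deterministic) drift $\theta\in\Ha$, the $\mathbb{Q}$-law of $Y_N$ is \emph{not} the $\PP$-law of $Y_N+\Theta_N$. Under $\mathbb{Q}$ one does have $Y_N=\P_N D^{-s}\widetilde W(1)+\Theta_N$ with $\widetilde W$ a $\mathbb{Q}$-Brownian motion, but $\theta$ (hence $\Theta_N$ and $\int_0^1\|\theta\|^2\,dt$) re-expressed as a functional of $\widetilde W$ is generally a \emph{different} adapted process $\tilde\theta$. So what one actually obtains is the $\PP$-law of $Y_N+\widetilde\Theta_N$ with energy $\tfrac12\int_0^1\|\tilde\theta\|^2\,dt$, not the analogous expression for the original $\theta$. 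The inequality for the \emph{supremum} still follows because $\theta\mapsto\tilde\theta$ is a bijection on a rich enough class of admissible drifts (Bou\'e--Dupuis approximate by simple, piecewise-deterministic drifts, for which the identity is transparent). The same reparametrization reappears when you ``perform the Girsanov shift'' of your pathwise identity for $\log M_t$: equality holds for the drift $\theta^{*}$ re-expressed in terms of the shifted Brownian motion, not for $\theta^{*}$ itself. This is exactly what the cited references spend their care on; once that substitution is made explicit, your plan is sound and matches their argument.
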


In our application, we set $F$ by 
\[F (u) = 
 \frac 1 p \| \P_N u\|^p_{L^p(\T^d)} \cdot 
 \ind_{\{ \| \P_{N} u \|_{L^2 (\mathbb{T}^d)}\leq K \}}\]

\noi
for given $K > 0$ and $N \in \N$, 
which is bounded in view of Bernstein's inequality.
We recall the following bound:
\begin{equation}
\label{var4} 
\| \Theta \|^2_{\dot{H}^s (\mathbb{T}^d)}
\leq \int_0^1 \| \theta (t) \|^2_{L^2 (\mathbb{T}^d)} d t
\end{equation}

\noi
for any $\ta \in \Ha$, 
which follows from 
Minkowski's integral inequality followed by Cauchy-Schwarz's inequality;
see \cite[Lemma 4.7]{GOTW}.

In establishing a lower bound 
for  the partition function $Z_{K, N}$
(such as \eqref{A2b}), 
we construct a specific drift $\ta = \ta_N$ 
in applying  Lemma \ref{LEM:var}.
As in \cite{OST, LW22, GOTT}, 
we choose a drift $\ta = \ta_N$  such that 
$\Dr_N$ looks like
`$-Y_N$+ a deterministic perturbation', where the perturbation term is bounded in 
$L^2(\T^d)$ but  has a large $L^p$-norm.
The next lemma, originally introduced
in \cite[Lemma 2.1]{TW23}
and  \cite[Lemma 3.4]{OST}, 
 provides
the construction of a good approximation of $Y_N$;
see \cite[Lemma 3.5]{LW22}
for the proof.

\begin{lemma}
\label{LEM:Z} 
Given $s > \frac{d}{2}$ and  $N \in \N$, 
define $\ze_N (t)$ by its Fourier coefficients. 
For  $0< | n | \leq N$,  
let 
$\ft \ze_N (t, n)$ be the solution to the following differential equation{\rm{:}}
\[ 
\begin{cases}
d \ft \ze_N (t, n) = (2\pi | n |)^{- s} N^{\frac{d}{2}}
(\ft Y (t, n) - \ft \ze_N (t, n)) d t\\
\ft \ze_N (0, n)  = 0,
\end{cases} 
\]

\noi
and we set $\ft \ze_N (t, n) = 0$ for $n = 0$ or $| n | > N$. 
Here, $Y$ is as in \eqref{Y1}.
Then, given any $\eps > 0$, 
 we have 
\begin{align*}
 \mathbb{E} \Big[\| \ze_N (1) - Y_N \|^p_{L^p (\mathbb{T}^d)}\Big] 
 & \lesssim 
\big\{\max
(N^{- s + \frac{d}{2}}, N^{- \frac{d}{2} +\eps})\big\}^{\frac{p}{2}}, \\
 \mathbb{E} \bigg[ \int_0^1 \Big\|  \frac{d}{dt} \ze_N (t) \Big\|_{\dot H^s (\mathbb{T}^d)}^2 dt \bigg] 
& \lesssim \max
(N^{\frac{3 }{2}d - s}, N^{\frac{d}{2} +\eps }).
\end{align*}

\noi
for any $N \gg 1$ and  finite $ p \geq 1$.

\end{lemma}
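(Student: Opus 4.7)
The plan is to exploit the linearity of the ODE for $\widehat{\zeta}_N(t,n)$ and the Gaussianity of the driving process $\widehat{Y}(t,n) = (2\pi|n|)^{-s}B_n(t)$, reducing both estimates to frequency-by-frequency variance computations. Writing $\lambda_n := (2\pi|n|)^{-s} N^{d/2}$, Duhamel's formula for the linear ODE gives $\widehat{\zeta}_N(t,n) = \lambda_n \int_0^t e^{-\lambda_n(t-\tau)}\widehat{Y}(\tau,n)\,d\tau$, and inserting the trivial identity $\widehat{Y}(t,n) = \lambda_n\int_0^t e^{-\lambda_n(t-\tau)}\widehat{Y}(t,n)\,d\tau + e^{-\lambda_n t}\widehat{Y}(t,n)$ yields the main formula
\begin{align*}
\widehat{Y}(t,n) - \widehat{\zeta}_N(t,n) = \lambda_n\int_0^t e^{-\lambda_n(t-\tau)}\bigl[\widehat{Y}(t,n) - \widehat{Y}(\tau,n)\bigr]d\tau + e^{-\lambda_n t}\widehat{Y}(t,n).
\end{align*}
The first term encodes the H\"older-in-time regularity of Brownian motion, while the second term is an initial-layer remainder.

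For the first bound, since $\zeta_N(1) - Y_N$ is a spatially stationary Gaussian field on $\T^d$, Gaussian hypercontractivity reduces the $L^p$ moment to an $L^2$ variance computation, yielding $\E[\|\zeta_N(1) - Y_N\|_{L^p}^p] \lesssim \bigl(\sum_{0<|n|\le N}\E|\widehat{\zeta}_N(1,n)-\widehat{Y}(1,n)|^2\bigr)^{p/2}$. Using the Brownian increment formula $\E|\widehat{Y}(\tau,n)-\widehat{Y}(1,n)|^2 = (2\pi|n|)^{-2s}(1-\tau)$ together with Cauchy--Schwarz in the $\tau$-integral, one verifies the frequency-by-frequency bound $\E|\widehat{\zeta}_N(1,n) - \widehat{Y}(1,n)|^2 \lesssim (2\pi|n|)^{-2s}\min(\lambda_n^{-1},1)$. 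The critical threshold is $|n|\sim N^{d/(2s)}$, where $\lambda_n\sim 1$: below this threshold the bound reads $(2\pi|n|)^{-s}N^{-d/2}$, and above it reads $(2\pi|n|)^{-2s}$. Summing both regimes (using $s>d/2$ to guarantee convergence of the high-frequency tail, and absorbing a possible logarithmic loss at $s=d$ into $\eps$) gives $\max(N^{-s+d/2},\,N^{-d/2+\eps})$, and raising to the $p/2$ power yields the first estimate.

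For the second bound, Plancherel applied to the ODE gives $\|\tfrac{d}{dt}\zeta_N(t)\|_{\dot H^s}^2 = N^d \sum_{0<|n|\le N}|\widehat{Y}(t,n)-\widehat{\zeta}_N(t,n)|^2$, so the same Duhamel-based variance analysis as above (uniformly in $t\in[0,1]$) yields $\int_0^1 \E|\widehat{Y}(t,n)-\widehat{\zeta}_N(t,n)|^2\,dt \lesssim (2\pi|n|)^{-2s}\min(\lambda_n^{-1},1)$. Multiplying by $N^d$ and splitting at $|n|\sim N^{d/(2s)}$, the low-frequency contribution yields $N^{3d/2 - s}$ (with logarithmic loss at $s=d$) while the high-frequency contribution yields $N^{d/2}$, matching $\max(N^{3d/2 - s},\,N^{d/2+\eps})$. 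The main technical obstacle is the careful case analysis of the frequency sums at the transition $|n|\sim N^{d/(2s)}$ and the tracking of marginal logarithmic losses at the borderline cases (notably $s=d$), which are the only source of the $\eps$ appearing in the statement.
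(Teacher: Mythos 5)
Your proposal is correct and follows the same route as the reference proof (\cite[Lemma~3.5]{LW22}): a Duhamel-type decomposition for the linear ODE, reduction of the $L^p$ moment to a mode-by-mode Gaussian variance computation, and a summation over frequencies. One bookkeeping point worth flagging: for $\tfrac d2 < s < d$, the split at $|n| \sim N^{d/(2s)}$ makes both the low- and high-frequency contributions to the second estimate (after the factor $N^d$) of size $N^{d^2/(2s)}$, not $N^{3d/2 - s}$ and $N^{d/2}$ respectively as you write; since $N^{d^2/(2s)} \le N^{3d/2 - s}$ in this range the stated bound still follows, and in fact the split is unnecessary --- applying the crude bound $\min(1,\lambda_n^{-1}) \le \lambda_n^{-1}$ uniformly over $0 < |n| \le N$ directly yields the rates $\max(N^{-s + d/2}, N^{-d/2+\eps})$ and $\max(N^{3d/2 - s}, N^{d/2+\eps})$, with the logarithmic loss appearing exactly at $s = d$.
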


\subsection{Fractional Gagliardo-Nirenberg-Sobolev inequality}

In this subsection, we establish some preliminary results
needed to study the critical case (Theorem \ref{THM:1})
whose proof is presented in Section \ref{SEC:3}.

We first recall the 
following fractional GNS inequality on $\mathbb{R}^d$:
\begin{align}
\|u\|^p_{L^{p}(\R^d)}\le C_{\GNS}(d,p,s)	\| u \|^{\frac{(p - 2) d}{2s}}_{\dot{H}^s (\mathbb{R}^d)} \| u \|_{L^2 (\mathbb{R}^d)}^{2 + \frac{p-2}{2 s} (2 s - d)},
\label{GNS1}
\end{align}

\noi
where
$C_{\GNS} = C_{\GNS}(d,p,s)$ denotes the optimal constant;
see \eqref{GNS3}.
As in \cite{LRS, OST1, LW22}, optimizers for \eqref{GNS1}
play a crucial role in our argument.
See
\cite{Nagy} 
and 
\cite{Weinstein}
for the proof of the following lemma
for (i) $d = s = 1$ and 
(ii)  $d\ge 2$ and $s=1$, respectively.
For the general case, see, for example, 
 \cite[Proposition 3.1]{Frank} and \cite[Theorem 2.1]{BFV14} and the references therein.

\begin{lemma}
\label{LEM:GNS}
Let $d \geq 1$.
Given $p > 2$ and $s > 0$, satisfying
$ p < \frac{2 d}{d - 2 s}$ if $d >2 s$, 
consider the functional
\begin{equation}
\label{GNS2} 
J_{\R^d} (u) = 
\frac{\| u \|^{\frac{(p - 2) d}{2s}}_{\dot{H}^s (\mathbb{R}^d)} \| u \|_{L^2 (\mathbb{R}^d)}^{2 + \frac{p -2}{2 s} (2 s - d)}}{\| u \|^{p}_{L^p (\mathbb{R}^d)}}
\end{equation}

\noi
on $H^s (\mathbb{R}^d)$. 
Then, the minimum
\begin{equation}
\label{GNS3} 
C_{\GNS}^{- 1} =
C_{\GNS} (d, p, s)^{- 1} :=
\inf_{\substack{ u \in H^s (\mathbb{R}^d)\\u \neq 0}} J_{\R^d} (u)
\end{equation}

\noi
is attained at some function $Q \in H^s (\mathbb{R}^d)$.
\end{lemma}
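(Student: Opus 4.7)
The plan is to establish existence of an optimizer by the direct method in the calculus of variations, combined with a concentration-compactness argument to handle the lack of compactness of the Sobolev embedding on $\R^d$.

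First, I exploit the two scale invariances of the functional $J_{\R^d}$. The exponents appearing in \eqref{GNS2} are chosen precisely so that $J_{\R^d}$ is invariant under both the multiplicative rescaling $u \mapsto \mu u$ and the spatial dilation $u(x) \mapsto u(\lambda x)$ for any $\mu,\lambda > 0$. Given any minimizing sequence $\{u_n\} \subset H^s(\R^d)$ for \eqref{GNS3}, we may therefore rescale to normalize
\begin{align*}
\|u_n\|_{L^2(\R^d)} = \|u_n\|_{\dot H^s(\R^d)} = 1,
\end{align*}
so that $\{u_n\}$ is bounded in $H^s(\R^d)$ with $\|u_n\|_{L^p(\R^d)}^p \to C_{\GNS}^{-1} > 0$. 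By the Banach--Alaoglu theorem, passing to a subsequence, $u_n \rightharpoonup Q$ weakly in $H^s(\R^d)$, with $\|Q\|_{L^2(\R^d)} \le 1$ and $\|Q\|_{\dot H^s(\R^d)} \le 1$ by weak lower semicontinuity. The task reduces to showing that $\|Q\|_{L^p(\R^d)}^p \ge C_{\GNS}^{-1}$ (after possibly translating the sequence), which, combined with the GNS inequality~\eqref{GNS1}, forces $Q$ to be an optimizer.

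The main obstacle is that the embedding $H^s(\R^d) \hookrightarrow L^p(\R^d)$, continuous thanks to the hypothesis $p < \frac{2d}{d-2s}$ when $d > 2s$, fails to be compact because of translation invariance. To resolve this I invoke Lions' concentration-compactness principle applied to the densities $\{|u_n|^2\}$. Vanishing is excluded by Lions' lemma: if $\sup_{y\in\R^d}\int_{B_R(y)} |u_n|^2\, dx \to 0$ for every $R > 0$, then $\|u_n\|_{L^p(\R^d)} \to 0$, contradicting the lower bound $\|u_n\|_{L^p}^p \to C_{\GNS}^{-1} > 0$. Dichotomy is excluded via the strict subadditivity of the infimum, a consequence of the homogeneity of $J_{\R^d}$ and strict concavity of $\alpha \mapsto \alpha^\beta$ for $\beta < 1$: splitting $u_n$ into two asymptotically separated profiles of positive $L^2$-mass would yield a strictly larger value of $J_{\R^d}$ in the limit than concentrating onto a single profile (cf.\ \cite{Frank, BFV14}).

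Hence only tightness (up to translation) survives: there exist $\{x_n\} \subset \R^d$ and $\widetilde Q \in H^s(\R^d) \setminus \{0\}$ such that $u_n(\cdot - x_n) \to \widetilde Q$ strongly in $L^p(\R^d)$. By translation invariance of $J_{\R^d}$ together with weak lower semicontinuity of the $\dot H^s$ and $L^2$ norms,
\begin{align*}
J_{\R^d}(\widetilde Q) \le \liminf_{n\to\infty} J_{\R^d}\big(u_n(\cdot - x_n)\big) = \liminf_{n\to\infty} J_{\R^d}(u_n) = C_{\GNS}^{-1},
\end{align*}
so that $Q := \widetilde Q$ attains the infimum in \eqref{GNS3}. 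The most delicate step is the dichotomy case of the concentration-compactness alternative in the fractional Sobolev framework, which we handle via the now-standard techniques developed in \cite{Frank, BFV14} and the references therein.
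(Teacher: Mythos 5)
The paper does not prove Lemma~\ref{LEM:GNS}: it quotes it as a known result, citing \cite{Nagy, Weinstein} for the $s=1$ cases and \cite{Frank, BFV14} for general $s$. Your concentration-compactness sketch reproduces in outline the argument of \cite{BFV14}, and the overall strategy is sound. Two minor corrections are in order. With the normalization $\|u_n\|_{L^2(\R^d)} = \|u_n\|_{\dot H^s(\R^d)} = 1$ one has $J_{\R^d}(u_n) = \|u_n\|_{L^p(\R^d)}^{-p}$, so a minimizing sequence satisfies $\|u_n\|_{L^p(\R^d)}^p \to C_{\GNS}$, not $C_{\GNS}^{-1}$, and the target inequality should read $\|Q\|_{L^p(\R^d)}^p \ge C_{\GNS}$. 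Also, the weak limit $Q$ extracted before invoking the compactness alternative could a priori vanish (the sequence may run off to infinity), so the candidate optimizer is really the translated strong $L^p$-limit $\widetilde{Q}$ obtained once vanishing and dichotomy are excluded; you identify this correctly at the end, but the preliminary $Q$ then plays no role and should not be conflated with it. As a point of comparison, the alternative route via Schwarz symmetrization and the compact radial embedding (Weinstein's argument for $s=1$) is not available for $s>1$, where the relevant rearrangement inequality fails, which is why the concentration-compactness approach is the standard one in the fractional setting and hence the natural choice here.
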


\begin{remark}
\label{REM:GNS}
\rm

As pointed out 
in  \cite[Remark 2.2]{LW22},
given any $c \in \mathbb{R}\backslash \{ 0 \}$, $b > 0$, and $a \in \mathbb{R}^d$, 
the  function 
$ c Q (b (x - a))$
is also a minimizer of the functional $J_{\R^d}(u)$ defined in \eqref{GNS2}. 
Therefore, by choosing the parameters appropriately, we assume
that
\begin{align}
\label{GNS4} 
\| Q \|_{L^2 (\mathbb{R}^d)}  = \| Q \|_{\dot{H}^s (\mathbb{R}^d)}
\quad \text{ and }\quad\| Q \|_{\dot{H}^s (\mathbb{R}^d)}^2 = \frac{2}{p} \| Q \|_{L^p(\mathbb{R}^d)}^p 
\end{align}

\noi
in the following.
Then, 
we have  $H_{\mathbb{R}^d} (Q) = 0$, 
where 
$H_{\mathbb{R}^d} (u)$ is as in \eqref{Hamil3}.
Moreover,  from~\eqref{GNS2} and \eqref{GNS4}, we have
\begin{equation}
\label{GNS5} 
C_{\GNS} = \frac{p}{2} \| Q
\|^{2 - p}_{L^2 (\mathbb{R}^d)}.
\end{equation}
\end{remark}

\noi
In the following, we assume that \eqref{GNS4} and \eqref{GNS5} are satisfied.

\medskip

Let $f$ be  a nice function  on $\R^d$.
Then, given $\eps > 0$, 
define $f_\eps$ by 
\begin{align}
f_\eps(x) = \eps^{-\frac d2}f(\eps^{-1} x)
\label{per1}
\end{align}

\noi
for $x \in \R^d$, 
and define the periodization $f^\per$ of $f$ by 
\begin{align}
f^\per(x) = \sum_{n \in \Z^d} f(x + n)
\label{per2}
\end{align}

\noi
for $x \in \T^d$.
Given small $\eps > 0$, 
we  then define a function  $f_\eps^\per$ on $\T^d$ by setting
\begin{align}
f_\eps^\per = (f_\eps)^\per.
\label{per3}
\end{align}

\noi
Then, 
from \eqref{per3} with \eqref{per1} and \eqref{per2}, 
we have 
\begin{align}
\ft f_\eps^\per(n) = \eps^\frac d2 \F_{\R^d} (f) (\eps n)
\label{per4}
\end{align}

\noi
for any $n \in \Z^d$.
Then, we have the following lemma.

\begin{lemma}
\label{LEM:sol1} 
Let $s > 0 $ and $2 \leq p < \infty$
such that $\frac sd \ge \frac 12 - \frac 1p$.
Given  a Schwartz function $f\in  \S(\R^d)$, 
let $f_\eps^\per$ be as in \eqref{per3}.
Then, we have 
\begin{align}
\| f_\eps^\per  \|^2_{\dot{H}^s (\mathbb{T}^d)}
&=  \eps^{-2s} \| f \|^2_{\dot H^{s} (\R^d)} + o(\eps^{-2s}), 
\label{SO1}\\
\| f_\eps^\per  \|_{L^p (\mathbb{T}^d)}^p &=
\eps^{-\frac{d p}{2} + d} \| f \|^p_{L^p (\R^d)} + o (\eps^{-\frac{d p}{2}+d}), 
  \label{SO2}
\end{align}

\noi
as $\eps \to \infty$, 
where the convergence rates of the error terms depend on $f$.
	
\end{lemma}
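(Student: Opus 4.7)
The plan is to exploit the Fourier identity \eqref{per4}, namely $\ft f_\eps^\per(n)=\eps^{d/2}\F_{\R^d}(f)(\eps n)$, to recognize both quantities in \eqref{SO1}--\eqref{SO2} as discrete approximations of the corresponding integrals over $\R^d$, and then to use the rapid decay of $f\in\S(\R^d)$ (and of $\F_{\R^d}f$) to dispose of all the error terms. All limits are taken as $\eps\to 0^+$.

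For \eqref{SO1}, I would apply Plancherel on $\T^d$ together with \eqref{per4} to write
\[
\|f_\eps^\per\|_{\dot H^s(\T^d)}^2
=\sum_{n\in\Z^d\setminus\{0\}}(2\pi|n|)^{2s}\eps^{d}|\F_{\R^d}(f)(\eps n)|^2
=\eps^{-2s}\cdot \eps^{d}\!\!\sum_{n\in\Z^d\setminus\{0\}} G(\eps n),
\]
where $G(\xi):=(2\pi|\xi|)^{2s}|\F_{\R^d}(f)(\xi)|^2$ is continuous on $\R^d$, vanishes at the origin, and is rapidly decreasing since $\F_{\R^d}f\in\S(\R^d)$. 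Standard Riemann-sum convergence for such integrands yields $\eps^{d}\sum_{n\neq 0}G(\eps n)\to\int_{\R^d}G\,d\xi=\|f\|_{\dot H^s(\R^d)}^2$, giving \eqref{SO1} with error $o(\eps^{-2s})$.

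For \eqref{SO2}, I would split the periodization on the fundamental domain $[-\tfrac12,\tfrac12]^d$ as $f_\eps^\per=f_\eps+R_\eps$ with $R_\eps(x):=\sum_{n\neq 0}f_\eps(x+n)$. Since $|x+n|\gtrsim|n|$ on this domain for $n\neq 0$, the Schwartz bound $|f(y)|\lesssim_M(1+|y|)^{-M}$ gives $|R_\eps(x)|\lesssim_M \eps^{M-d/2}$ for any $M>d$, and hence $\|R_\eps\|_{L^p(\T^d)}\lesssim_M \eps^{M-d/2}$. By the change of variables $y=x/\eps$,
\[
\|f_\eps\|_{L^p(\T^d)}^p=\eps^{-\frac{dp}{2}+d}\,\|f\|_{L^p([-(2\eps)^{-1},(2\eps)^{-1}]^d)}^p,
\]
and the tail $\|f\|_{L^p(\R^d\setminus[-(2\eps)^{-1},(2\eps)^{-1}]^d)}^p$ is $O_M(\eps^M)$ for any $M>0$ by Schwartz decay. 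Combining these bounds via the triangle inequality in $L^p(\T^d)$ (using that $\|f_\eps\|_{L^p(\T^d)}\sim \eps^{-\frac{d}{2}+\frac{d}{p}}$ dominates $\|R_\eps\|_{L^p(\T^d)}$ once $M$ is large) yields \eqref{SO2}.

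The main technical point is simply careful bookkeeping: verifying that both the aliasing term $R_\eps$ and the truncation tail are $o(\eps^{-\frac{dp}{2}+d})$ in $L^p(\T^d)$, and that the Riemann-sum discretization error in Step~1 is $o(1)$. Both are ensured by taking $M$ sufficiently large in the Schwartz estimates, and by continuity and rapid decay of $G$ for the Riemann-sum step. No cancellation or fine structure of $f$ is needed, and the hypothesis $\frac{s}{d}\geq \frac12-\frac1p$ plays no direct role beyond ensuring compatibility with the ambient Sobolev embedding used elsewhere.
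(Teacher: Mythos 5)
Your proof is correct and follows essentially the same route as the paper: both establish \eqref{SO1} via Plancherel, the Fourier identity \eqref{per4}, and a Riemann-sum limit using the rapid decay of $\F_{\R^d}f$, and both establish \eqref{SO2} by restricting to the fundamental domain, rescaling to get the $L^p(\eps^{-1}R)$ norm, and discarding the aliasing/tail contributions using Schwartz decay. (You correctly read the limit as $\eps\to 0^+$; the ``$\eps\to\infty$'' in the lemma statement is a typo, and your direct pointwise bound on the aliasing remainder $R_\eps$ is a slightly cleaner way to control the cross term than the paper's formulation.)
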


\begin{proof}
From \eqref{per4}
and a Riemann sum approximation, we have 
\begin{align*}
\| f_\eps^\per\|_{\dot H^s(\T^d)}^2
= 
\eps^{-2s} 
\sum_{n \in \Z^d} (2\pi \eps |n|)^{2s} 
 |\F_{\R^d} (f) (\eps n)|^2 \cdot \eps^d
 = \eps^{-2s} 
 \| f \|^2_{\dot H^{s} (\R^d)} + o(\eps^{-2s}), 
\end{align*}

\noi
as $\eps \to 0$.
This proves \eqref{SO1}.

Let $R=  [- \frac 12, \frac12)^d\subset \R^d$.
Then, from \eqref{per2}, we have 
\begin{align}
\| f_\eps^\per  \|_{L^p (\mathbb{T}^d)}^p 
=
\| f_\eps  \|_{L^p (R)}^p 
+ O\Big(\| f_\eps  \|_{L^p (R)}^{p-1}
\| f_\eps  \|_{L^p (R^c)} 
+ \| f_\eps  \|_{L^p (R^c)}^p \Big).
\label{SO3}
\end{align}

\noi
By the Lebesgue dominated convergence theorem, 
we have
\begin{align}
\begin{split}
\eps^{\frac{d p}{2} - d} \| f_\eps  \|_{L^p (R)}^p 
& = 
 \| f \|^p_{L^p (\eps^{-1}R)}
=  \| f \|^p_{L^p (\R^d)} + o(1), \\
\eps^{\frac{d p}{2} - d} \| f_\eps  \|_{L^p (R^c)}^p 
& = 
 \| f \|^p_{L^p ((\eps^{-1}R)^c)}
=   o(1), 
\end{split}
\label{SO4}
\end{align}

\noi
Then, \eqref{SO2} follows from \eqref{SO3} and \eqref{SO4}.
\end{proof}

Given  $K> \|Q\|_{L^2 (\R^d)}$, define the energy functional $\EE_K$ by 
\begin{align}
\EE_K(u)
= \frac{K^p}{p}\|u\|^{p}_{L^p(\T^d)}-\frac{K^2}{2}\|u\|^2_{\dot H^{s}(\T^d)}.
\label{QQ1}
\end{align}

\noi
Then, we have the following positivity result for $\EE_K$.

\begin{lemma}
\label{LEM:sol2}
Let $d \ge 1$, $s > 0$, and $p > 2$
such that 
 $p=\frac{4s}{d}+2$.
Let $Q$ be the optimizer of the GNS inequality \eqref{GNS1}
on $\R^d$, 
defined in Remark \ref{REM:GNS}.
Then, given any   $K> \|Q\|_{L^2 (\R^d)}$, 
we have
\begin{align}
\sup_{\substack{\|u\|_{L^{2}(\T^d)}=1\\u=\P_Nu}}
\EE_K(u) > 0, 
\label{QQ2}
\end{align}

\noi
provided $N \in \N$ is sufficiently large.
\end{lemma}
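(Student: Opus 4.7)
The plan is to construct an explicit test function by periodizing a dilated, smooth, band-limited approximation of the GNS optimizer $Q$. First, I would compute the flat-space analogue of $\EE_K$. Set $\widetilde Q := Q/\|Q\|_{L^2(\R^d)}$; by the normalizations \eqref{GNS4} I have $\|\widetilde Q\|_{L^2(\R^d)} = \|\widetilde Q\|_{\dot H^s(\R^d)} = 1$ and $\|\widetilde Q\|_{L^p(\R^d)}^p = \tfrac{p}{2}\|Q\|_{L^2(\R^d)}^{2-p}$, so using the mass-critical identity $p = 4s/d + 2$ a direct computation yields
\[
\EE_K^{\R^d}(\widetilde Q) := \frac{K^p}{p}\|\widetilde Q\|_{L^p(\R^d)}^p - \frac{K^2}{2}\|\widetilde Q\|_{\dot H^s(\R^d)}^2 = \frac{K^2}{2}\bigg(\bigg(\frac{K}{\|Q\|_{L^2(\R^d)}}\bigg)^{p-2} - 1\bigg) > 0
\]
under the hypothesis $K > \|Q\|_{L^2(\R^d)}$.

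Next, I would replace $\widetilde Q$ by a band-limited Schwartz substitute. Since $\F_{\R^d}^{-1}(C_c^\infty(\R^d))$ is dense in $H^s(\R^d)$, and since $s > d/2$ forces the Sobolev embedding $H^s \hookrightarrow L^p$, I can choose $f \in \mathcal S(\R^d)$ with $\F_{\R^d}(f)$ supported in some ball $B_R$ and with $f$ close to $\widetilde Q$ in $H^s \cap L^p$. By continuity of the scale-invariant ratio this preserves $\EE_K^{\R^d}(f/\|f\|_{L^2(\R^d)}) > 0$. Fix such an $f$ and its band radius $R$.

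I would then periodize: for small $\eps > 0$, form $f_\eps^{\per}$ as in \eqref{per3}. From \eqref{per4}, $\widehat{f_\eps^{\per}}(n)$ vanishes for $|n| > R/\eps$, so $\P_N f_\eps^{\per} = f_\eps^{\per}$ whenever $N \ge R/\eps$. Lemma \ref{LEM:sol1} applies (its hypothesis $s/d \ge 1/2 - 1/p$ reduces to $p \ge 2$ after using $p = 4s/d+2$), and together with the critical identity $\tfrac{dp}{2} - d = 2s$ it produces the matched scaling
\[
\|f_\eps^{\per}\|_{\dot H^s(\T^d)}^2 = \eps^{-2s}\|f\|_{\dot H^s(\R^d)}^2 + o(\eps^{-2s}), \qquad \|f_\eps^{\per}\|_{L^p(\T^d)}^p = \eps^{-2s}\|f\|_{L^p(\R^d)}^p + o(\eps^{-2s}).
\]
A Riemann-sum argument based on \eqref{per4} (using that $\F_{\R^d}(f)$ is continuous and compactly supported) additionally gives $\|f_\eps^{\per}\|_{L^2(\T^d)}^2 \to \|f\|_{L^2(\R^d)}^2$ as $\eps \to 0$.

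To conclude, I would normalize by setting $u_\eps := f_\eps^{\per}/\|f_\eps^{\per}\|_{L^2(\T^d)}$, so $\|u_\eps\|_{L^2(\T^d)} = 1$. The three expansions combine to give
\[
\EE_K(u_\eps) = \eps^{-2s}\,\EE_K^{\R^d}\!\big(f/\|f\|_{L^2(\R^d)}\big) + o(\eps^{-2s}),
\]
which is strictly positive for all sufficiently small $\eps$. Fix any such $\eps_0$; then for every $N \ge R/\eps_0$ the function $u_{\eps_0}$ meets every requirement of \eqref{QQ2}. The hard part is the approximation step: one cannot use $Q$ directly because its Fourier transform need not be compactly supported, and hence $Q_\eps^{\per}$ is not annihilated by $\P_N$ at any fixed scale; the whole argument rests on the observation that a band-limited Schwartz substitute can be chosen close enough to $Q$ in $H^s \cap L^p$ that the flat-space energy remains strictly positive, after which the matched scalings of $\|\,\cdot\,\|_{\dot H^s(\T^d)}^2$ and $\|\,\cdot\,\|_{L^p(\T^d)}^p$ (both of order $\eps^{-2s}$, a feature specific to the mass-critical exponent) transfer the strict positivity to the torus.
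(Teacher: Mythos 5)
Your proof is correct, and it takes a route that differs from the paper's in two substantive ways.

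First, you work directly with the energy $\EE_K^{\R^d}$ on $\R^d$: you compute $\EE_K^{\R^d}(Q/\|Q\|_{L^2}) = \frac{K^2}{2}\bigl((K/\|Q\|_{L^2})^{p-2}-1\bigr) > 0$ using the normalizations \eqref{GNS4}, and then transfer strict positivity by continuity. The paper instead works with the scale-invariant GNS functional $J$ from \eqref{GNS2}: it picks $f \in \S(\R^d)$ with $J_{\R^d}(f) \le C_{\GNS}^{-1} + \gamma$ and shows $J_{\T^d}(\P_N f_\eps^{\per}) \le C_{\GNS}^{-1} + 3\gamma$, then closes via the algebraic identity
$\EE_K(u_N) = \tfrac{K^p}{p}\|u_N\|_{L^p}^p\bigl(1 - \tfrac p2 K^{2-p} J_{\T^d}(\P_N f_\eps^{\per})\bigr)$,
with $\gamma$ chosen so that $\tfrac p2 K^{2-p}(C_{\GNS}^{-1}+3\gamma) < 1$. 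Both routes reduce to the same fact about $C_{\GNS}$, and your explicit energy computation is arguably more transparent.

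Second, and more importantly, you choose your Schwartz approximant $f$ with $\F_{\R^d}(f)$ compactly supported, so that $f_\eps^{\per}$ is exactly $\P_N$-invariant once $N \ge R/\eps$; this dispenses with the separate convergence step $\P_N f_\eps^{\per} \to f_\eps^{\per}$ in $H^s \cap L^p$ that the paper uses to establish \eqref{QQ7}. Your choice aligns the lemma with the class $\mathcal{A}$ appearing later in Subsection 3.2, and removes the need to invoke Sobolev embedding for the $L^p$-convergence of the truncations. The trade-off is that your approximation argument carries an extra step (band-limitation on top of Schwartz), but it pays off in a cleaner conclusion. Both proofs are essentially equivalent in difficulty; yours is self-contained and correct. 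One cosmetic note: the Lemma~\ref{LEM:sol1} statement in the paper says ``as $\eps \to \infty$'', which is a typo for ``$\eps \to 0$'' — you used the lemma in the intended direction.
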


\begin{proof}
Fix $K > \|Q\|_{L^2(\R^d)}$.
In view of 
\eqref{GNS5}, choose small $\g > 0$
such that 
\begin{align}
\frac p 2 K^{2 - p} (C_{\GNS}^{-1} + 3\g) < 1.
\label{QQ3}
\end{align}

Let $J_{\R^d}(u)$ be as in  \eqref{GNS2}.
Under the current assumption, we have
\begin{equation}
J_{\R^d} (u) = 
\frac{\| u \|^{2}_{\dot{H}^s (\mathbb{R}^d)} \| u \|_{L^2 (\mathbb{R}^d)}^{p - 2}}{\| u \|^{p}_{L^p (\mathbb{R}^d)}}.
\label{QQ4}
\end{equation}

\noi
We define $J_{\T^d}(u)$
by replacing $\R^d$ with $\T^d$ in \eqref{QQ4}.
Let  $Q$ be 
the optimizer of the GNS inequality~\eqref{GNS1} on $\R^d$
as in Remark \ref{REM:GNS}
such that 
$J_{\R^d} (Q) = C_\GNS^{-1}$.
Given $\g > 0$ as in \eqref{QQ3}, 
let $f \in \S(\R^d)$ such that 
\begin{align*}
J_{\R^d}(f) \le   C_\GNS^{-1} + \g.
\end{align*}


\noi
Then, from Lemma \ref{LEM:sol1}, 
we have 
\begin{align}
J_{\T^d}(f_{\eps}^\per)\le  C^{-1}_{\GNS}+2\gamma
\label{QQ5}
\end{align}

\noi
by choosing $\eps = \eps(\g)> 0$ sufficiently small, 
where  $f_{\eps}^\per$ is as in \eqref{per3}.

Given $N \in \N$, 
define a function $u_N$ on $\T^d$ by setting
\begin{align}
u_N=\frac{\P_{N}f_{\eps}^\per}{\|\P_{N}f_{\eps}^\per\|_{L^{2}(\T^d)}}.
\label{QQ6}
\end{align}

\noi
Recalling from Lemma \ref{LEM:GNS} and \eqref{per3}
that  $f_\eps^\per \in H^s(\T^d)$, 
we see that $\P_{N}f_{\eps}^\per$ converges to $f_{\eps}^\per$
in $H^s(\T^d)$ as 
$N \to \infty$.
Then, from Sobolev's inequality, 
we see that 
$\P_{N}f_{\eps}^\per$ converges to $f_{\eps}^\per$
also in $L^p(\T^d)$ as 
$N \to \infty$, 
where
$p=\frac{4s}{d}+2$.
Hence, it follows from \eqref{QQ5}
that 
\begin{align}
J_{\T^d}(\P_{N}f_{\eps}^\per)
\le 
J_{\T^d}(f_{\eps}^\per) + 2\g \le  C^{-1}_{\GNS}+3\g, 
\label{QQ7}
\end{align}

\noi
provided $N = N(\g, \eps) = N(\g)$
is sufficiently large.
Hence, 
from 
\eqref{QQ1}, \eqref{QQ6}, \eqref{QQ4} (for $J_{\T^d}$), \eqref{QQ7}, 
and \eqref{QQ3}, we obtain
\begin{align*}   
\EE_K(u_N) 
&= \frac{K^{p}}{p}\|u_N\|^{p}_{L^{p}(\T^d)}\bigg(1-\frac{p}{2}K^{2-p}\frac{\|u_N\|^2_{\dot H^{s}(\T^d)}}{\|u_N\|^p_{L^{p}(\T^d)}}\bigg)\notag\\
&=\frac{K^{p}}{p}\|u_N\|^{p}_{L^{p}(\T^d)}
\bigg(1-\frac{p}{2}K^{2-p}J_{\T^d}(\P_{N}f_{\eps}^\per)\bigg)\notag\\
& \ge \frac{K^{p}}{p}\|u_N\|^{p}_{L^{p}(\T^d)}
 \bigg(1-\frac{p}{2}K^{2-p}( C^{-1}_{\GNS}+3\gamma)\bigg) > 0.
\end{align*}

\noi 
This proves \eqref{QQ2}.
\end{proof}

\section{Critical case} 
\label{SEC:3}

In this section, we present a proof of Theorem \ref{THM:1}
on the $L^2$-critical case:
\begin{align}
 p=\frac{4s}{d}+2, \quad \text{namely,}\quad
2s =  \frac {dp}2 - d, 
 \label{up0a}
\end{align}

\noi
where $s \in \N$.
Given  $K> \|Q\|_{L^2 (\R^d)}$, define the energy functional $E_K(u)$ by 
\begin{align}
E_K(u)
= \frac{K^p}{p}\|u\|^{p}_{L^p(\R^d)}-\frac{K^2}{2}\|u\|^2_{\dot H^{s}(\R^d)}.
\label{EE1}
\end{align}

\noi
which is the Euclidean counterpart of the energy functional $\EE_K(u)$
defined in \eqref{QQ1}.
Then, 
by rescaling, 
we can rewrite 
$\CC_K$  in \eqref{A2}
as 
\begin{align}
\CC_K=\sup_{\substack{\|u\|_{L^2(\R^d)} = 1\\ 
u = \P u}} E_K(u).
\label{up0b}
\end{align}

\subsection{Upper bound}
\label{SUBSEC:3.1}

In this subsection, we prove the following upper bound:
\begin{align}
\limsup_{N\to \infty}N^{-2s} \log Z_{K, N}\le \CC_K, 
\label{up0}
\end{align}

\noi
where
$Z_{K, N} = Z_{d, s, p, K, N}$
is as in \eqref{Z1} 
and $\CC_K$ is as in \eqref{up0b}.

We first perform a preliminary computation.
Let $\eps >0$ (to be chosen later).
Then,
by Cauchy's inequality, we have 
\begin{align}
\label{up2}
\begin{split}
\|Y_N+\Dr_N\|^2_{\dot H^{s}(\T^d)}
&=\|Y_N\|^2_{\dot H^{s}(\T^d)}+\|\Dr_N\|^2_{\dot H^{s}(\T^d)}+2\text{Re}\,
\jb{Y_N,\Dr_N}_{\dot H^{s}(\T^d)}\\
& \le \frac{1}{1-N^{-\eps}} \|\Dr_N\|^2_{\dot H^{s}(\T^d)}
+N^\eps \|Y_N\|^2_{\dot H^{s}(\T^d)}.
\end{split}
\end{align}

\noi
Then, from 
\eqref{up2}, 
\eqref{Y3}, and 
Bernstein's inequality, 
we have 
\begin{align}
\begin{split}
\E\big[\|\Dr_N\|^2_{\dot H^{s}(\T^d)}\big]
&\ge (1-N^{-\eps}) \E\big[\|Y_N+\Dr_N\|^2_{\dot H^{s}(\T^d)}\big]-cN^{d+\eps}\\
& \ge 
\E\big[
\|Y_N+\Dr_N\|^2_{\dot H^{s}(\T^d)} \cdot \ind_{\{\|Y_N+\Dr_N\|_{L^{2}(\T^d)}\le K\}}\big]\\
& \quad -K^{2}N^{2s-\eps}
\E\big[ \ind_{\{\|Y_N+\Dr_N\|_{L^{2}(\T^d)}\le K\}}\big]-cN^{d+\eps}\\
& \ge 
\E\big[
 \|Y_N+\Dr_N\|^2_{\dot H^{s}(\T^d)} \cdot \ind_{\{\|Y_N+\Dr_N\|_{L^{2}(\T^d)}\le K\}}
\big]
- o(N^{2s}), 
\end{split}
\label{up3}
\end{align}

\noi
as $N \to \infty$, 
where the last step follows from choosing $\eps > 0$ sufficiently small
such that $2s > d  + \eps$.
Hence, 
from Lemma \ref{LEM:var} with \eqref{var4} and \eqref{up3}, 
we have 
\begin{align}
\log  Z_{K, N}
& \le \log \E_{\mu}\bigg[
\exp\bigg( \frac 1 p\| \P_N u \|_{L^p(\T^d)}^p \cdot \ind_{\{ \| \P_N u \|_{L^2 (\mathbb{T}^d)}\leq K \}}  \bigg)\bigg]\notag\\
&  \le \sup_{\dr\in \mathbb{H}_a}\E\bigg[
\frac{1}{p}\|Y_N+\Dr_N\|^p_{L^p(\T^d)} \cdot 
\ind_{\{\|Y_N+\Dr_N\|_{L^2(\T^d)}\le K\}}-\frac{1}{2}\|\Dr_N \|_{\dot H^s(\T^d)}^2\bigg]\notag \\
& \le \sup_{\dr\in \mathbb{H}_a}\E\bigg[\bigg(
\frac{1}{p}\|Y_N+\Dr_N\|^p_{L^p(\T^d)}
-\frac{1}{2}\|Y_N+\Dr_N\|^2_{\dot H^{s}(\T^d)}\bigg) \notag \\
& \hphantom{XXXXX}
\times  \ind_{\{\|Y_N+\Dr_N\|_{L^2(\T^d)}\le K\}}\bigg]+o(N^{2s})
\label{up4}
\\
& \le \sup_{\substack{ \|u\|_{L^2(\T^d)}\le K\\u = \P_N u\\}}
\bigg(\frac{1}{p}\|u\|_{L^p(\T^d)}^p-\frac{1}{2}\|u\|^2_{\dot H^{s}(\T^d)}\bigg)\notag \\
& \hphantom{XXXXX}
 \times \sup_{\theta \in \mathbb{H}_a} \PP \Big(\{ \| Y_N +
\Theta_N \|_{L^2 (\mathbb{T}^d)} \leq K \}\Big) +o(N^{2s})\notag \\
& \le \sup_{\substack{  \|u\|_{L^2(\T^d)}\le 1\\u = \P_N u}}
\EE_K(u)
+o(N^{2s}), \notag
\end{align}

\noi
as $N \to \infty$, 
where $\EE_K(u)$ is as in \eqref{QQ1}.
Hence, 
by setting
\begin{align}
\CC_{K,N}
 = 
 \sup_{\substack{\|u\|_{L^2(\T^d)}\le 1\\u = \P_N u}}
\EE_K(u), 
\label{up6x}
\end{align}

\noi
the desired bound
\eqref{up0} follows 
once we prove
\begin{align}
\limsup_{N\to \infty}N^{-2s}\CC_{K,N}
: = 
\limsup_{N\to \infty}N^{-2s}  \sup_{\substack{\|u\|_{L^2(\T^d)}\le 1\\u = \P_N u}}
\EE_K(u)
\le \CC_K.
\label{up6}
\end{align}

Under the assumption $K>\|Q\|_{L^{2}(\R^d)}$, 
it follows from 
 Lemma \ref{LEM:sol2}
that $\CC_{K, N} > 0$. 
By noting that $\EE_K\big(\|u\|_{L^2(\T^d)}^{-1} u \big) 
\ge \EE_{K}(u)$ for 
$0 < \|u\|_{L^2(\T^d)} \le 1$, we can rewrite $\CC_{K, N}$ in \eqref{up6x} as 
\begin{align}
\CC_{K,N}=\sup_{\substack{\|u\|_{L^2(\T^d)}= 1\\u = \P_N u}}
\EE_{K}(u). 
\label{up7}
\end{align}

\noi
Moreover, 
it follows from 
 the compactness of the  set $\big\{u \in L^2(\T^d): u = \P_Nu, \, \|u\|_{L^{2}(\T^d)}=  1\big\}$
 that,  
for each $N \in \N$, 
there exists an optimizer $f_N$
for~\eqref{up7} (such that $f_N = \P_N f_N$).
In the following, when we view $f_N$ as a function on $\R^d$, we simply view it
as a periodic function: $f (x) = f (x + k)$, $k \in \Z^d$.

Fix small $\eps>0$. Let $\chi_{\eps}\in C_c^{\infty}(\R^d;[0,1])$ be a smooth bump function
compactly supported on $\big[-\tfrac 12 ,\tfrac 12 \big)^d\cong \T^d$
such that $\chi_{\eps}\equiv1$ on 
$\big[-\frac 12 + c_0 \eps ,\frac 12 - c_0 \eps \big)^d$
 for some small $c_0>0$
(to be chosen later) 
and
\begin{align}
\|\partial^{\alpha}\chi_{\eps}\|_{L^{\infty}(\R^d)}\les \eps^{-|\alpha|}.
\label{chi1}
\end{align}

\noi
for any multi-index $\al$.
As in the proof of \cite[Lemma 2.2]{OST}, 
we claim that, 
by translating $f_N$ (that does not affect its optimality for \eqref{up7}) and choosing $c_0$ sufficiently small (independent of small
$\eps > 0$ and $N\in \N$), we have
\begin{align}
\begin{split}
 \|\chi^2_{\eps}f_N\|_{L^{p}(\R^d)}^p = \|\chi^2_{\eps}f_N\|_{L^{p}(\T^d)}^p&\ge (1-C\eps)\|f_N\|_{L^{p}(\T^d)}^p,
\\
\|\chi^2_{\eps}f_N\|_{L^{2}(\R^d)}^2
= 
\|\chi^2_{\eps}f_N\|_{L^{2}(\T^d)}^2&\ge (1- C\eps)\|f_N\|_{L^{2}(\T^d)}^2.
\end{split}
\label{chi2}
\end{align}

\noi
Indeed, by writing 
$\big[-\frac 12 ,\frac 12 \big)^d$
as a union of  disjoint (modulo boundaries)
congruent cubes $R_j$ of side length $\sim \eps$
with centers $x_j$, $j \in I_0$ with $|I_0| \sim \eps^{-d}$, 
we first write
$\big \{x \in \big[-\frac 12 ,\frac 12 \big)^d; \chi_\eps(x) \ne 1\big\} \subset A:= \bigcup_{j \in I_1} R_j$
for some index set $I_1$ with $|I_1| \sim \eps^{-d+1}$.
Then, given $q = 2$ or $p$, we have
\begin{align*}
\sum_{j \in I_0} \| f_N \|_{L^q(x_j + A)}^q
= C_1  \eps^{-d+1} \|f_N\|_{L^q(\T^d)}^q
\end{align*}

\noi
for some $C_1 > 0$.
Then, 
we see that there exist at least $\frac 23 |I_0| \sim \eps^{-d}$ many $x_j$'s
such that 
\begin{align}
\| \chi_\eps^2 f_N \|_{L^q(x_j + A)}^q \le \| f_N \|_{L^q(x_j + A)}^q
 < \frac 3{|I_0|}
C_1 \eps^{-d+1} \|f_N\|_{L^q(\T^d)}^q
\sim 
\eps \|f_N\|_{L^p(\T^d)}^p,
\label{chi2a}
\end{align}

\noi
which implies that we can  translate $f_N$ by $x_j$
such that 
both of the bounds in \eqref{chi2} 
follow from~\eqref{chi2a} and the triangle inequality
(and there are $O(\eps^{-d})$-many choices for such $x_j$).

Let $\eta \in C_c^{\infty}(\R^d;[0,1])$ be a smooth radial bump function on $\R^d$ such that 
$\eta(\xi)=1$ for $|\xi|\le 1$
and $\eta(\xi)=0$ for $|\xi|>2$.
 Given $M>0$, let $\eta_M(\xi)=\eta(\frac{\xi}{M})$
 and set 
\begin{align}
\chi_{\eps,M}=\mathcal{F}^{-1}_{\R^d}(\eta_M)*\chi_{\eps}.
\label{chi4}
\end{align}

\noi
Since $\chi_\eps$ is a Schwartz function, 
there exists $M = M(\eps) \gg 1$ for each fixed small $\eps > 0$ such that 
\begin{align}
\| \dd^\al (\chi_{\eps}-\chi_{\eps,M})\|_{L^{1}(\R^d)\cap L^{\infty}(\R^d)}\ll \eps
\label{chi5}
\end{align}

\noi
for any multi-index $\al$
with $|\al|\le s$.
Moreover, proceeding as in \cite[(2.9)]{OST}, we have 
\begin{align}
\begin{split}
\|\chi_{\eps,M}(\,\cdot+m)\|_{L^{\infty}([-\frac 12,\frac 12)^d)}
&\le M^d\sup_{x\in [-\frac 12 ,\frac 12)^d}\bigg |\int_{\R^d}\chi_\eps(x+m-y)\mathcal{F}^{-1}_{\R^d}(\eta)(My)dy\bigg|
\\
& \les \frac{M^d}{\jb{Mm}^{2d+1} }\ll \frac{\eps}{\jb{m}^d}, 
\end{split}
\label{chi6}
\end{align}

\noi
uniformly in $m \in \Z^d\setminus \{0\}$.
A similar computation with \eqref{chi1} yields
\begin{align}
\|\partial^{\alpha}(\chi_{\eps,M}(\,\cdot+m))\|_{L^{\infty}([-\frac12,\frac 12)^d)}
 \les \frac{\eps^{1-|\alpha|}}{\jb{m}^d}.
\label{chi7}
\end{align}

\noi
We also note that, 
by proceeding as in  \cite[(2.10)]{OST} 
with 
\eqref{chi5} and \eqref{chi6}, 
we have 
\begin{align}
\|(\chi^2_{\eps,M} - \chi^2_{\eps})f_N\|^p_{L^{p}(\R^d)}
\ll \eps^p \|f_N\|^p_{L^{p}(\T^d)}.
\label{chi8}
\end{align}

Given $N \in \N$ and $M =  M(\eps)$ as above, 
 we introduce a function $g_{N,M}$ on $\R^d$ by setting
\begin{align}
g_{N,M} (x) = N^{- \frac d2} \chi_{\eps,M}^2 (N^{-1}x) f_N (N^{-1} x).
\label{GG1}
\end{align}

\noi
Then, by a change of variables with \eqref{up0a}, we have
\begin{align}
\begin{split}
\|g_{N,M}\|^2_{\dot H^s(\R^d)}
&=N^{-2s}\| \chi^2_{\eps,M} f_N\|^2_{\dot H^s(\R^d)},\\
\|g_{N,M}\|^{p}_{L^{p}(\R^d)}
&=N^{-2s}
\|\chi^2_{\eps,M}f_N\|^p_{L^{p}(\R^d)}.
\end{split}
\label{GG2}
\end{align}

\noi
Moreover, from 
\eqref{GG2}, \eqref{chi8},  
and 
\eqref{chi2}, we have 
\begin{align}
\|g_{N,M}\|^{p}_{L^{p}(\R^d)}
\ge  (1- C\eps) 
N^{-2s}\|f_N\|^p_{L^{p}(\T^d)}.
\label{GG3}
\end{align}

\noi
From \eqref{GG3} (with $p = 2$ and $s = 0$), \eqref{GG1}, and $\|f_N\|_{L^2(\T^d)} = 1$, we also have 
\begin{align}
\|g_{N,M}\|_{L^{2}(\R^d)}^2 = 1+ O(\eps).
\label{GG4}
\end{align}

Next, we establish a lower bound on $\|f_N\|_{\dot H^s(\T^d)}^2$
in terms of  $\|g_{N,M}\|^2_{\dot H^s(\R^d)}$,
where we crucially use the fact that $s \in \N$.
Given any multi-indices $\al_2$ and $\al_3$, 
it follows from 
 \eqref{chi6} and \eqref{chi5} 
 with 
$\| f_N \|_{L^2(\T^d)} = 1$
that 
\begin{align}
\begin{split}
\| &\dd^{\al_2}(\chi_{\eps,M}-\chi_{\eps}) \cdot \partial^{\alpha_3}f_N \|^2_{L^{2}(\R^d)}\\
&=\int_{[-\frac 12,\frac 12)^d}
|\dd^{\al_2}(\chi_{\eps,M}-\chi_{\eps})(x)|^2
| \partial^{\alpha_3}f_N(x)|^2 dx\\
& \quad 
+\sum_{m\in \Z^d\setminus \{0\}}\int_{[-\frac 12,\frac 12)^d}
|\dd^{\al_2}\chi_{\eps,M}(x+m)|^2 
|\partial^{\alpha_3}f_N(x)|^2dx\\
&\le \bigg(\| \dd^{\al_2}(\chi_{\eps,M}-\chi_{\eps})\|^2_{L^{\infty}(\R^d)}
+   \sum_{m\in \Z^d\setminus \{0\}}\frac {\eps^{2(1-|\al_2|)}}{\jb{m}^{2d}}
\bigg)
\|\dd^{\al_3} f_N \|_{L^2(\T^d)}^2\\
& \les \eps^{2(1-|\al_2|)}
N^{2|\al_3|}.
\end{split}
\label{GG6}
\end{align}

\noi
Then, 
from 
\eqref{Hs4},  
the  Leibniz rule, \eqref{GG6},  \eqref{chi5}, and  \eqref{chi1}, we have 
\begin{align}
\begin{split}
& \|  (\chi^2_{\eps,M}-\chi^2_{\eps}) f_N\|_{\dot H^{s}(\R^d)}^2
\le \sum_{|\alpha|=s}\binom{s}{\alpha}
\|\dd^{\alpha}((\chi^2_{\eps,M}-\chi^2_{\eps}) f_N) \|^2_{L^2(\R^d)}\\
&\quad \le \sum_{|\al_1|+ |\al_2|+|\al_3|=s}
C_{s, \al_1, \al_2, \al_3}
\|\dd^{\alpha_1}(\chi_{\eps,M}+\chi_{\eps})
\cdot
\dd^{\alpha_2}(\chi_{\eps,M}- \chi_{\eps}) 
\cdot \dd^{\alpha_3} f_N \|^2_{L^2(\R^d)}\\
&\quad \le \sum_{|\al_1|+ |\al_2|+|\al_3|=s}
C_{s, \al_1, \al_2, \al_3}'
\|\dd^{\alpha_1}(\chi_{\eps,M}+\chi_{\eps})\|_{L^\infty(\R^d)}^2
 \eps^{2(1-|\al_2|)}
N^{2|\al_3|}\\
&\quad \le \sum_{|\al_1|+ |\al_2|+|\al_3|=s}
C_{s, \al_1, \al_2, \al_3}''
 \eps^{2(1-|\al_1| - |\al_2|)}
N^{2|\al_3|}.
\end{split}
\label{GG6a}
\end{align}

\noi
Thus, 
by Cauchy's inequality, we have 
\begin{align}
\begin{split}
 \|  \chi^2_{\eps,M} f_N\|_{\dot H^{s}(\R^d)}^2
& \le   \Big(\|  \chi^2_{\eps} f_N\|_{\dot H^{s}(\R^d)}
+ C \eps N^{s}  + C\eps^{-s + 1} N^{s-1}\Big)^2\\
& \le  (1 + \eps ) \|  \chi^2_{\eps} f_N\|_{\dot H^{s}(\R^d)}^2
+ C \eps^{-1}\Big(\eps N^{s}  + C\eps^{-s + 1} N^{s-1}\Big)^2.
\end{split}
\label{GG5}
\end{align}

On the other hand, 
from \eqref{Hs4}, 
the Leibniz rule, the fact that $ \chi^2_\eps \in [0, 1]$ 
is supported on
$\big[-\tfrac 12 ,\tfrac 12 \big)^d\cong \T^d$, 
 \eqref{chi1},  and Bernstein's inequality
  with 
  $f_N = \P_N f_N$ and 
$\| f_N \|_{L^2(\T^d)} = 1$, 
we have
\begin{align}
\begin{split}
\| \chi^2_{\eps} f_N \|_{\dot H^{s}(\R^d)}
&\le \bigg(\sum_{|\alpha|=s}\binom{s}{\alpha}
\|\chi^2_{\eps} \cdot \dd^{\alpha}f_N \|^2_{L^2(\R^d)}
\bigg)^\frac 12 \\
&\quad
+\bigg(\sum_{|\alpha|=s}\binom{s}{\alpha}
\bigg\|\sum_{\be< \alpha}\binom{\alpha}{\be}\partial^{\alpha-\be}\chi^2_{\eps}
\cdot \partial^{\be}f_N
\bigg\|^2_{L^2(\R^d)} \bigg)^\frac 12 \\
& \le \|f_N\|_{\dot H^{s}(\T^d)}+C \eps^{-s}N^{s-1}.
\end{split}
\label{GG6b}
\end{align}

\noi
By squaring both sides
and applying Bernstein's inequality
  with 
$\| f_N \|_{L^2(\T^d)} = 1$, 
 we have
\begin{align}
\begin{split}
N^{-2s}\| \chi^2_{\eps} f \|_{\dot H^{s}(\R^d)}^2
& \le
N^{-2s} \|f_N\|_{\dot H^{s}(\T^d)}^2+
C\eps^{-s}N^{-1}
+ 
C \eps^{-2s}N^{-2}.
\end{split}
\label{GG7}
\end{align}

\noi
Hence, from 
\eqref{GG2}, \eqref{GG5}, 
and 
\eqref{GG7}, we obtain
\begin{align}
\begin{split}
\|g_{N,M}\|^2_{\dot H^s(\R^d)}
&=N^{-2s}\| \chi^2_{\eps,M} f_N\|^2_{\dot H^s(\R^d)}\\
&\le 
(1 + \eps )
N^{-2s}\| \chi^2_{\eps} f_N\|^2_{\dot H^s(\R^d)}
+ C\eps + C\eps^{-2s + 1} N^{-2}\\
&\le (1 + \eps ) N^{-2s}\|  f_N\|^2_{\dot H^s(\R^d)}
+ C\eps + C_\eps N^{-1}
\end{split}
\label{GG8}
\end{align}

\noi
for some constant $C_\eps > 0$, depending on $\eps > 0$
(which diverges as $\eps \to 0$).

Let $\EE_K(u)$ be as in 
\eqref{QQ1}.
Then, from  \eqref{GG3} and \eqref{GG8}, 
we have 
\begin{align}
\begin{split}
N^{-2s} \EE_K(f_N)
& = 
N^{-2s}\bigg[ \frac{K^p}{p} \|f_N \|^p_{L^p (\T^d)} - \frac{K^2}2
\|f_N \|^2_{\dot H^s(\mathbb{T}^d)} \bigg]\\
&\le \frac{K^p}{p}(1-C\eps)^{-1}\|g_{N,M}\|^p_{L^{p}(\R^d)}
- \frac{K^2}{2}(1+\eps)^{-1}\|g_{N,M}\|^2_{\dot H^s(\R^d)}\\
& \quad + C(K) \eps + C(K, \eps) N^{-1}.
\end{split}
\label{GH1}
\end{align}

Recall from \eqref{chi4} that 
 $\supp \F_{\R^d} (\chi_{\eps, M}) \subset B_{2M}$, 
 where
$B_{2M}$ is as in \eqref{ball1}.
Then, from \eqref{GG1}, we have 
\begin{align*}
\supp (\F_{\R^d}(g_{N,M})) \subset B_{1+\frac{2M}N}, 
\end{align*}

\noi
as $N \to \infty$, uniformly in small $\eps > 0$.
Moreover, note  from 
\eqref{GG1} with \eqref{chi4}
that 
 $\|g_{N,M}\|_{L^{2}(\R^d)}\neq 1$ in general.
In view of these observations, 
 we define  
a function   $h_{N,M}$ on $\R^d$ by 
\begin{align}
h_{N,M}(x)=\|g_{N,M}\|^{-1}_{L^2(\R^d)}
\Big(1+\frac{2M}{N}\Big)^{-\frac{d}{2}}g_{N,M}\bigg(\Big(1+\frac{2M}{N}\Big)^{-1}x\bigg)
\label{GH3}
\end{align}

\noi
such that 
\begin{align}
\| h_{N, M}\|_{L^2(\R^d)} = 1
\qquad \text{and}\qquad 
h_{N,M} = \P h_{N,M}.
\label{GH4}
\end{align}

\noi
By Bernstein's inequality with \eqref{GH4}, we have 
\begin{align}
\|h_{N,M}\|_{\dot H^s (\R^d)}, \|h_{N,M}\|_{L^{p}(\R^d)}
\les \|h_{N,M}\|_{L^{2}(\R^d)}= 1.	
\label{GH5}
\end{align}

\noi
Hence, using
\eqref{GH3},  \eqref{GG4}, 
 \eqref{GH5}, and \eqref{up0b}
 with \eqref{GH4}, 
we can bound 
the main contribution in~\eqref{GH1} by 
\begin{align}
\begin{split}
&\frac{K^p}{p}(1-C\eps)^{-1}\|g_{N,M}\|^p_{L^{p}(\R^d)}
-\frac{K^2}{2}(1+\eps)^{-1}\| g_{N,M}\|^2_{\dot H^{s}(\R^d)}\\
&=\frac{K^p}{p}(1-C\eps)^{-1}\|g_{N,M}\|^{p}_{L^{2}(\R^d)}
\bigg(1+\frac{2M}{N}\bigg)^{-2s}\|h_{N,M}\|^p_{L^{p}(\R^d)}\\
&\quad
-\frac{K^2}{2}(1+\eps)^{-1}
\|g_{N,M}\|^{2}_{L^{2}(\R^d)}
\bigg(1+\frac{2M}{N}\bigg)^{-2s}\| h_{N,M}\|^2_{\dot H^{s}(\R^d)}\\
& \le E_K(h_{N, M})
+  C_{K, \eps}(M, N)
+ C_K(\eps)\\
& \le \CC_K 
+  C_{K, \eps}(M, N)
+ C_K(\eps), 
\end{split}
\label{GH7}
\end{align}

\noi
where $E_K(u)$ is as in \eqref{EE1}, 
and 
$C_{K, \eps}(M, N)$
and
$C_K(\eps)$ satisfy 
\begin{align}
\lim_{N \to \infty} C_{K, \eps}(M, N) = 0
\qquad \text{and} \qquad \lim_{\eps \to 0} C_K(\eps) = 0.
\label{GH8}
\end{align}

\noi
Here, the first convergence in \eqref{GH8}
follows from the fact that 
 $M = M(\eps)$ is independent of $N \in \N$
 (in particular $1 + \frac{2M}{N} \to 1$ as $N \to \infty$)
 for each fixed $\eps > 0$ 
(used in \eqref{chi5}, \eqref{chi6}, and~\eqref{chi7}).

Therefore, recalling that 
$f_N$ is an optimizer for \eqref{up7}, 
it follows from 
\eqref{up7}, 
 \eqref{GH1},  and \eqref{GH7} 
 that 
\begin{align*}
\limsup_{N\to \infty} N^{-2s}\CC_{K,N}
 = \limsup_{N\to \infty}
 N^{-2s}
  \EE_K(f_N)
\le 
\CC_K + 
C(K) \eps+ 
C_K(\eps).
\end{align*}

\noi
Finally, by taking $\eps$ to $0$
and applying \eqref{GH8}, 
we obtain
\eqref{up6}
(and hence \eqref{up0}).

\begin{remark}\label{REM:up1}\rm

In the argument presented above, 
we crucially relied on the fact that $s$ is an integer
in applying \eqref{Hs4} (with the fact that
$\dd^\al$ is a local operator)
and also the Leibniz rule;
see \eqref{GG6}, \eqref{GG6a}, and \eqref{GG6b}.
While we expect that the upper bound \eqref{up0}
also holds for non-integer $s > \frac d2$
(by making use of fractional calculus; see
\cite{BOZ} and the references therein), 
we do not pursue this issue
in the present paper.

\end{remark}

\subsection{Lower bound}
\label{SUBSEC:3.2}

Next,  we prove the following lower bound:
\begin{align}
\liminf_{N\to \infty}N^{-2s} \log Z_{K, N}\ge \CC_K
\label{low0}
\end{align}

\noi
for $ s > \frac d2$ and $p$ satisfying \eqref{up0a}, 
where
$Z_{K, N} = Z_{d, s, p, K, N}$
is as in \eqref{Z1} 
and $\CC_K$ is as in~\eqref{A2}.
As pointed out in Remark \ref{REM:1}, 
we do not need to assume that $s$ is an integer
for the lower bound~\eqref{low0}.
By following the ideas developed in \cite{OST}, 
we prove \eqref{low0}
by constructing a suitable drift 
in~\eqref{var2} of Lemma \ref{LEM:var}
for each $N \in \N$.

Set 
\begin{align}
 \mathcal{A}
= \big\{
f \in \S(\R^d):  
\|f\|_{L^2(\R^d)} = 1, \,
\supp \F_\R^d(f) \subset B_1, \, 
\F_{\R^d}(f) (0) = 0\big\}.
\label{AA1}
\end{align}

\noi
Then, given any $s \ge 0$ and $p \ge 2$, 
it follows from 
Bernstein's inequality that 
\begin{align}
\| f\|_{\dot H^s(\R^d)}, \|f\|_{L^p(\R^d)} \les \| f\|_{L^2(\R^d)} = 1
\label{AA2}
\end{align}

\noi
for any $f \in \mathcal{A}$.
Fix
 $f \in \mathcal{A}$.
Given $N \in \N$, 
 a function $F_N$  on $\T^d$ by 
\begin{align}
F_N (x) =  N^{-\frac d 2} \sum_{\substack{|n| \le N}} \F_{\R^d}(f)(N^{-1} n) e^{2\pi i n \cdot x}.
\label{low1} 
\end{align}

\noi
Note that $\ft F_N(0) = 0$ and that we have $F_N = f_{N^{-1}}^\per$, 
where the latter is as in 
\eqref{per3}.
In particular, Lemma \ref{LEM:sol1} holds; 
with \eqref{up0a}, we have 
\begin{align}
\| F_N  \|^2_{\dot{H}^s (\mathbb{T}^d)}
&=  N^{2s} \| f \|^2_{\dot H^{s} (\R^d)} + o(N^{2s}), 
\label{low2}\\
\| F_N  \|_{L^p (\mathbb{T}^d)}^p &=
N^{\frac {dp}2 - d } \| f \|^p_{L^p (\R^d)} + o (N^{\frac {dp}2 - d}).
  \label{low3}
\end{align}

Given $N \in \N$, we define  a drift $\dr_N$ by setting
\begin{align}
\dr_N (t) 
& = -D^{s}\frac{d}{dt} \ze_N(t) + \alpha D^s F_N , 
\label{low2a}
\end{align}

\noi
where 
$\ze_N$ is as in Lemma \ref{LEM:Z}
and 
$\alpha = \al(K, N)> 0$ is a constant given by 
\begin{align}
\alpha=K- N^{-\be}
\label{low4}
\end{align}

\noi
for some  $0<\be < \min\big(\frac s2 - \frac d4, \frac d4 \big)$.
Then, we  set 
\begin{align}\label{low5}
\Dr_{N}=
\int_0^1 D^{- s} \theta_N (t) dt
= 
-\ze_N
+ 
\alpha F_N, 
\end{align}

\noi
where we used the short-hand notation: $\ze_N = \ze_N(1)$.
Note that
from Lemma \ref{LEM:Z} and \eqref{low1}, we have 
$\Dr_N = \P_{N}\Dr_{N}$.

From  \eqref{Z1}, 
we have 
\begin{align}
  Z_{K, N}
\ge \E_{\mu}\bigg[ 
\exp\bigg( \frac 1 p \| \P_N u\|^p_{L^p(\T^d)} \cdot 
 \ind_{\{ \| \P_{N} u \|_{L^2 (\mathbb{T}^d)}\leq K \}}\bigg)\bigg]-1.
\label{low5a}
\end{align}

\noi
Thus, we see that  \eqref{low0} follows once we prove 
\begin{align}
\liminf_{N\to \infty}
N^{-2s} 
\log \E_{\mu}\bigg[ 
\exp\bigg( \frac 1 p \| \P_N u\|^p_{L^p(\T^d)} \cdot 
 \ind_{\{ \| \P_{N} u \|_{L^2 (\mathbb{T}^d)}\leq K \}}\bigg)\bigg] \ge \CC_K.
\label{low5b}
\end{align}

\noi
From 
Lemma \ref{LEM:var}
and \eqref{low5}, we have 
\begin{align}
\begin{split}
& \log \E_{\mu}\bigg[ 
\exp\bigg( \frac 1 p \| \P_N u\|^p_{L^p(\T^d)} \cdot 
 \ind_{\{ \| \P_{N} u \|_{L^2 (\mathbb{T}^d)}\leq K \}}\bigg)\bigg] \\
& \quad = \sup_{\theta\in \mathbb{H}_a}\E \bigg[
\frac{1}{p}\|Y_N+\Dr_N\|^p_{L^p(\T^d)} \cdot \ind_{\{\|Y_N+\Dr_N\|_{L^2(\T^d)}\le K\}}
-\frac{1}{2}\int_{0}^{1}\|\theta_N(t)\|^2_{L_x^2(\T^d)}dt\bigg]\\
& \quad  \ge \E\bigg[\frac{1}{p}\|Y_N-\ze_N+\al F_N\|^p_{L^p(\T^d)}
 \cdot \ind_{\{\|Y_N-\ze_N+\al F_N\|_{L^2(\T^d)}\le K\}}\\
& \quad  \phantom{XXX}
-\frac{1}{2}\int_{0}^{1}\|- \dt \ze_N(t)+\alpha F_N\|^2_{\dot H^s (\T^d) }dt\bigg]\\
& \quad 
=\frac{\alpha^p}{p}\|F_N\|^p_{L^p(\T^d)}-\frac{\alpha^2}{2}\|F_N\|^2_{\dot H^{s}(\T^d)}
+\sum_{j=1}^3B_j,
\end{split}
\label{low6}
\end{align}

\noi
where $B_j$, $j = 1, 2, 3$, are given by 
\begin{align}
\begin{split}
 B_1& =-\frac{1}{p}
\E\bigg[\Big(\|\alpha F_N\|^p_{L^{p}(\T^d)}-\|Y_N-\ze_N+\alpha F_N\|^p_{L^p(\T^d)}\Big)\\
& \hphantom{XXXXX}
\times  \ind_{\{\|Y_N-\ze_N+\alpha F_N\|_{L^2(\T^d)}\le K\}}\bigg],\\
B_2 & =-\frac{\alpha^p}{p}\E\bigg[\|F_N\|^p_{L^p(\T^d)}
\cdot \ind_{\{\|Y_N-\ze_N+\alpha F_N\|_{L^2(\T^d)}>K\}} \bigg],\\\
B_3 & =-\frac{1}{2}\E\bigg[\int_{0}^1 
\|\dt \ze_N(t)\| ^2_{\dot H^s(\T^d)}-2 
\jb{ \dt \ze_N(t), \alpha F_N }_{\dot H^{s}(\T^d)}dt\bigg].
\end{split}
\label{low6a}
\end{align}

In the following, we first prove
\begin{align}
\sum_{j=0}^{3}|B_j|=o(N^{2s}).
\label{low7}
\end{align}

\noi
From the  mean value theorem, we have 
\begin{align*}
& \| \alpha F_N \|^p_{L^p (\mathbb{T}^d)} - \| Y_N - \ze_N  +
\alpha F_N \|^p_{L^p (\mathbb{T}^d)}\\
& \quad \lesssim \int_{\mathbb{T}^d} | Y_N - \ze_N  |^p + | Y_N - \ze_N| | \alpha F_N |^{p-1} d x.
\end{align*}

\noi
Fix  small $\eps > 0$.
Then, it follows from \eqref{low6a}, 
Lemma \ref{LEM:Z},  and 
\eqref{low3} that 
\begin{align}
\begin{split}
|B_1| & \lesssim 
\big\{\max (N^{- s + \frac{d}{2}}, N^{- \frac{d}{2}+\eps})\big\}^{\frac{p}{2}} 
+ \big\{\max (N^{- s + \frac{d}{2}}, N^{- \frac{d}{2}+\eps})\big\}^{\frac{1}{2}}
\| F_N \|^{p-1}_{L^{p}(\mathbb{T}^d)}  \\
&=o (N^{\frac {dp}2 - d}) = o (N^{2s}).
\end{split}
\label{low9}
\end{align}

\noi
 By \eqref{low3}, 
Chebyshev's inequality,  
Lemma \ref{LEM:Z}, and \eqref{low4}, we have 
\begin{align}
\begin{split}
|B_2| 
& \lesssim \| F_N \|^p_{L^p(\mathbb{T}^d)} \cdot
\PP \Big(
 \| Y_N - \ze_N \|_{L^2 (\mathbb{T})} > K - \alpha \| F_N \|_{L^2 (\mathbb{T}^d)} \Big)\\
& \les N^{\frac {dp}2 - d} \frac{\mathbb{E} \big[\|Y_N - \ze_N \|_{L^2 (\mathbb{T}^d)}^2\big]}
{\big(K - \alpha \| F_N \|_{L^2 (\mathbb{T}^d)} \big)^2}\\
& \les N^{\frac {dp}2 - d} N^{2\be} \max (N^{-s+\frac{d}{2}}, N^{-\frac{d}{2} +\eps})
= o (N^{\frac {dp}2 - d})
=o(N^{2s}), 
\end{split}
\label{low10}
\end{align}

\noi
 since 
it follows from \eqref{low2} and \eqref{AA2} with $s = 0$
that 
$\| F_N \|_{L^2 (\mathbb{T}^d)} = 1 + o(1)$.
Noting that  
$\jb{ \dt \ze_N(t), \alpha F_N }_{\dot H^{s}(\T^d)}$
is a mean-zero  Gaussian random variable for each $0 \le t \le 1$, 
it follows from 
Lemma \ref{LEM:Z}  that 
\begin{align}
|B_3|  =\frac{1}{2}\E\bigg[\int_{0}^1 
\|\dt \ze_N(t)\| ^2_{\dot H^s(\T^d)}dt \bigg]
\les 
\max
(N^{\frac{3 }{2}d - s}, N^{\frac{d}{2} +\eps })
= o (N^{2s}), 
\label{low11}
\end{align}

\noi
since $ s > \frac d2$.
Hence, \eqref{low7} follows
from \eqref{low9}, \eqref{low10}, and \eqref{low11}.

Therefore, 
from
 \eqref{low6}, \eqref{low7}, \eqref{low2}, \eqref{low3}, and \eqref{low4}, 
we obtain
\begin{align}
\begin{split}
& N^{-2s} 
 \log \E_{\mu}\bigg[ 
\exp\bigg( \frac 1 p \| \P_N u\|^p_{L^p(\T^d)} \cdot 
 \ind_{\{ \| \P_{N} u \|_{L^2 (\mathbb{T}^d)}\leq K \}}\bigg)\bigg]\\
& \quad 
\ge \frac{(K-N^{-\be})^p}{p}\|f\|^p_{L^p(\R^d)}-\frac{(K-N^{-\be})^2}{2}\|f\|^2_{\dot H^{s}(\R^d)}
+ o(1)\\
& \quad = E_K(f) + o(1), 
\end{split}
\label{low12}
\end{align}

\noi
as $N \to \infty$, 
where $E_K(f)$ is as in \eqref{EE1}.

Note that $\mathcal{A}$ defined in \eqref{AA1} 
is dense in the class:
\begin{align}
 \mathcal{A}_0 = \{ f: \|f\|_{L^2(\R^d)} = 1\}\cap 
 \{f : \supp \F_\R^d(f) \subset B_1\}.
\label{low13}
\end{align}

\noi
Moreover, 
by Bernstein's inequality \eqref{AA2}, 
we see
that $\mathcal{A}$ is dense in 
$\mathcal{A}_0 \cap \dot H^s(\R^d)\cap L^p(\R^d)$.
Then, given 
small $\eps > 0$, 
it follows from \eqref{up0b}
and the observation above that 
we can choose $f \in \mathcal{A}$
such that 
\begin{align}
E_K(f)
\ge  \sup_{\substack{\|u\|_{L^2(\R^d)} = 1\\ 
u = \P u}} E_K(u) - \eps 
= \CC_K - \eps.
\label{low14}
\end{align}

\noi
Hence, by putting 
\eqref{low12}
and \eqref{low14} together, we have
\begin{align*}
\liminf_{N\to \infty}
N^{-2s} 
\log \E_{\mu}\bigg[ 
\exp\bigg( \frac 1 p \| \P_N u\|^p_{L^p(\T^d)} \cdot 
 \ind_{\{ \| \P_{N} u \|_{L^2 (\mathbb{T}^d)}\leq K \}}\bigg)\bigg] \ge \CC_K-\eps
\end{align*}

\noi
Since the choice of $\eps > 0$ 
is arbitrary, 
we therefore conclude 
\eqref{low5b} (and hence \eqref{low0}).
This conclude the proof of Theorem \ref{THM:1}.

\begin{remark}\label{REM:non1}\rm

In the following, we briefly discuss how to obtain
\eqref{ZA2}
in Remark \ref{REM:mean}.
With  $W$  in \eqref{W1}, 
define $\wt Y$ by 
\begin{equation*}
\wt Y (t) = \jb{\nb}^{- s} W (t).
\end{equation*}

\noi
Note that we have
$\Law (\wt Y (1)) = \wt \mu$,
where $\wt \mu$ is 
 the massive fractional Gaussian free field
in~\eqref{gauss2}.
Given $N \in \N$, set
$\wt Y_N = \P_{N} \wt Y (1)$.
Given a drift $\dr \in \Ha$, 
we also define
 $\wt \Dr $ by 
\begin{equation*}
\wt \Theta = \int_0^1 \jb{\nb}^{- s} \theta (t) dt
\end{equation*}

\noi
and $\wt \Dr_N = \P_N \wt \Dr(1) $.
We note that, 
instead of \eqref{var4}, 
we have 
\begin{equation}
\| \wt \Theta \|^2_{H^s (\mathbb{T}^d)}
\leq \int_0^1 \| \theta (t) \|^2_{L^2 (\mathbb{T}^d)} d t
\label{ZA5} 
\end{equation}

\noi
for any $\ta \in \Ha$.
With these notations, 
 the variational formula (an analogue of Lemma \ref{LEM:var})  reads as 
\begin{align}
\log\E \big[e^{F (\wt Y_N)}\big] = \sup_{\theta \in\Ha} 
\E \bigg[ F (\wt Y_N + \wt \Theta_N) - \frac{1}{2} \int_0^1\| \theta (t) \|^2_{L^2 (\mathbb{T}^d)} d t \bigg].
\label{ZA6} 
\end{align}

\noi
Then, 
proceeding as in \eqref{up4}
with 
\eqref{ZA6} and \eqref{ZA5}, 
we have 
\begin{align*}
\log  \wt Z_{K, N}
&  \le \sup_{\dr\in \mathbb{H}_a}\E\bigg[
\frac{1}{p}\|\wt Y_N+\wt \Dr_N\|^p_{L^p(\T^d)} \cdot 
\ind_{\{\|\wt Y_N+\wt \Dr_N\|_{L^2(\T^d)}\le K\}}-\frac{1}{2}\|\wt \Dr_N \|_{ H^s(\T^d)}^2\bigg]\notag \\
&  \le \sup_{\dr\in \mathbb{H}_a}\E\bigg[
\frac{1}{p}\|\wt Y_N+\wt \Dr_N\|^p_{L^p(\T^d)} \cdot 
\ind_{\{\|\wt Y_N+\wt \Dr_N\|_{L^2(\T^d)}\le K\}}-\frac{1}{2}\|\wt \Dr_N \|_{\dot  H^s(\T^d)}^2\bigg], 
\end{align*}

\noi
where the second step follows
from the trivial bound: $\|f\|_{\dot H^s(\T^d)}
\le \|f\|_{H^s(\T^d)}$.
In particular, 
we have
\begin{align*}
\log  \wt Z_{K, N} \le \text{RHS of } \eqref{up4}.
\end{align*}

\noi
Hence, 
 by repeating the argument in Subsection \ref{SUBSEC:3.1}, 
we obtain the upper bound:
\begin{align}
\limsup_{N\to \infty}N^{-2s} \log \wt Z_{K, N}\le \CC_K, 
\label{ZA7}
\end{align}

\noi
where $\CC_K$ is as in \eqref{A2}.

Next, we turn our attention to the lower bound.
Let $F_N$ be as in 
\eqref{low1}.
Then, 
from the mean value theorem, 
\eqref{low3} (with $p = 2$), 
and the fact that $\| f\|_{L^2(\R^d)} = 1$, 
we have
\begin{align*}
\Big|\|F_N\|^2_{ H^{s}(\T^d)}
- \|F_N\|^2_{ \dot H^{s}(\T^d)}\Big|
& \les \sum_{|n|\le N} |n|^{2(s-1)}|\ft F _N(n)|^2\\
& \le  N^{2(s-1)} \Big(\| f \|^2_{L^2 (\R^d)}+o(1)\Big)
= o(N^{2s}), 
\end{align*}

\noi
as $N \to \infty$.
Thus, instead of \eqref{low2}, we have 
\begin{align}
\| F_N  \|^2_{H^s (\mathbb{T}^d)}
&=  N^{2s} \| f \|^2_{\dot H^{s} (\R^d)} + o(N^{2s}).
\label{ZA8}
\end{align}

\noi
Then, by proceeding as in 
\eqref{low6} with \eqref{ZA8}, we have 
\begin{align*}
& \log \E_{\wt \mu}\bigg[ 
\exp\bigg( \frac 1 p \| \P_N u\|^p_{L^p(\T^d)} \cdot 
 \ind_{\{ \| \P_{N} u \|_{L^2 (\mathbb{T}^d)}\leq K \}}\bigg)\bigg] \\
& \quad 
\ge \frac{\alpha^p}{p}\|F_N\|^p_{L^p(\T^d)}-\frac{\alpha^2}{2}\|F_N\|^2_{\dot H^{s}(\T^d)}
+\sum_{j = 1}^3 \wt B_j  + o(N^{2s}).
\end{align*}

\noi
where $\wt B_j$, $j = 1, 2, 3$, are 
 given by 
\begin{align*}
\wt  B_1& =-\frac{1}{p}
\E\bigg[\Big(\|\alpha F_N\|^p_{L^{p}(\T^d)}-\|\wt Y_N-\wt \ze_N+\alpha F_N\|^p_{L^p(\T^d)}\Big)\\
& \hphantom{XXXXX}
\times  \ind_{\{\|\wt Y_N-\wt \ze_N+\alpha F_N\|_{L^2(\T^d)}\le K\}}\bigg],\\
\wt B_2 & =-\frac{\alpha^p}{p}\E\bigg[\|F_N\|^p_{L^p(\T^d)}
\cdot \ind_{\{\|\wt Y_N-\wt \ze_N+\alpha F_N\|_{L^2(\T^d)}>K\}} \bigg],\\\
\wt B_3 & =-\frac{1}{2}\E\bigg[\int_{0}^1 
\|\dt \wt \ze_N(t)\| ^2_{ H^s(\T^d)}-2 
\jb{ \dt \wt \ze_N(t), \alpha F_N }_{H^{s}(\T^d)}dt\bigg].
\end{align*}

\noi
Here, $\wt \ze_N$ is an approximation to $\wt Y_N$, 
an analogue of $\ze_N$
in 
 Lemma \ref{LEM:Z}
adapted to the current non-homogeneous setting.
Namely, 
 given any $\eps > 0$, 
 we have 
\begin{align}
\begin{split}
 \mathbb{E} \Big[\|\wt \ze_N (1) - \wt Y_N \|^p_{L^p (\mathbb{T}^d)}\Big] 
 & \lesssim 
\big\{\max
(N^{- s + \frac{d}{2}}, N^{- \frac{d}{2} +\eps})\big\}^{\frac{p}{2}}, \\
 \mathbb{E} \bigg[ \int_0^1 \Big\|  \frac{d}{dt} \wt \ze_N (t) \Big\|_{H^s (\mathbb{T}^d)}^2 dt \bigg] 
& \lesssim \max
(N^{\frac{3 }{2}d - s}, N^{\frac{d}{2} +\eps }).
\end{split}
\label{ZA11}
\end{align}

\noi
for any $N \gg 1$ and  finite $ p \geq 1$.
The construction of such $\wt \ze_N$, satisfying 
the bounds  \eqref{ZA11}, follows from a straightforward
modification of the proof of Lemma \ref{LEM:Z}
and thus we omit details.
Then, 
 by repeating the argument in Subsection \ref{SUBSEC:3.2}, 
 we 
 obtain the lower bound:
\begin{align}
\liminf_{N\to \infty}N^{-2s} \log \wt Z_{K, N}\ge \CC_K.
\label{ZZ1}
\end{align}

Therefore,
 from \eqref{ZA7} and \eqref{ZZ1}, 
 we obtain \eqref{ZA2}.

\end{remark}

\section{Supercritical case}\label{SEC:4}

In this section, we present a proof of Theorem \ref{THM:2}
on the $L^2$-supercritical case:
\begin{align}
 p>\frac{4s}{d}+2, \quad \text{namely,}
 \quad 2s < \frac {dp}2 - d, 
 \label{sp1}
\end{align}

\noi
where $s > \frac d2$.

\subsection{Upper bound}
In this subsection, we  establish the following upper bound:
\begin{align}
\limsup_{N \to \infty}
N^{-\frac{dp}{2}+d}\log Z_{K, N}\le C_B \frac{K^{p}}{p}.
\label{sp0a}
\end{align}

By arguing as in \cite[Lemma 2.2]{OST},  we obtain
\begin{align}\label{sp1a}
\|\P_N f \|^p_{L^{p}(\T^d)}\le C_B N^{\frac{dp}{2}-d}(1+o(1))\|f\|^{p}_{L^{2}(\T^d)},
\end{align}

\noi
where $C_B$ is the optimal constant in Bernstein's inequality  \eqref{A4}.
Then, 
proceeding as in~\eqref{up4}
with a change of variable $V_N = Y_N + \Dr _N$
and \eqref{sp1a}, 
we have 
\begin{align*}
\log Z_{K, N} 
&  \le \sup_{\dr\in \mathbb{H}_a}\E\bigg[
\frac{1}{p}\|Y_N+\Dr_N\|^p_{L^p(\T^d)} \cdot 
\ind_{\{\|Y_N+\Dr_N\|_{L^2(\T^d)}\le K\}}\bigg]\notag \\
&  \le \sup_{\substack{\|V_N \|_{L^2(\T^d)} \le K\\V_N = \P_N V_N }}
\frac{1}{p}\|V_N \|^p_{L^p(\T^d)} \cdot 
\ind_{\{\|V_N \|_{L^2(\T^d)}\le K\}}\\
& \le  C_B \frac{K^p}{p} N^{\frac{dp}{2}-d}
+ o(N^{\frac{dp}{2}-d}), 
\end{align*}

\noi
as $N \to \infty$.
This proves the upper bound for 
\eqref{sp0a}.

\subsection{Lower bound}

Next, we establish the following lower bound:
\begin{align}
\liminf_{N \to \infty}
N^{-\frac{dp}{2}+d}\log Z_{K, N}\ge C_B \frac{K^{p}}{p}.
\label{sp1b}
\end{align}

Let $\mathcal{A}$ be as in \eqref{AA1}.
Given $\eps > 0$, 
it follows from the density of $\mathcal{A}$
in $\mathcal{A}_0$ defined in \eqref{low13}
that there exists $f \in \mathcal{A}$ such that 
\begin{align}
\|f\|^p_{L^{p}(\R^d)}\ge C_B-\eps, 
\label{sp2}
\end{align}

\noi
where $C_B$ is
the optimal constant in  Bernstein's inequality \eqref{A4}.
Proceeding as in Subsection~\ref{SUBSEC:3.2}, 
we define 
$F_N$ as in \eqref{low1} 
and  the drift $\dr_N$ as in \eqref{low2a}.
In this case, instead of \eqref{low7}, 
it follows from \eqref{low9}, \eqref{low10}, and \eqref{low11}
with \eqref{sp1}
that 
\begin{align}
\sum_{j=0}^{3}|B_j|=o(N^{\frac {dp}2 - d}).
\label{sp3a}
\end{align}

\noi
Then, by combining 
\eqref{low5a}, 
\eqref{low6}, 
\eqref{low2},  \eqref{low3}, and \eqref{sp3a}
with \eqref{sp1} and \eqref{AA2}, we have 
\begin{align}
\begin{split}
\log (Z_{K, N} + 1)
&\ge \frac{\alpha^p}{p}
N^{\frac{dp}{2}-d}
\|f\|^p_{L^{p}(\R^d)} 
+o(N^{\frac{dp}{2}-d})\\
& \quad -\frac{\alpha^2}{2}N^{2s}\|f\|^2_{\dot H^s(\R^d)}+o(N^{2s})
 +\sum_{j = 1}^{3}B_j\\
&=\frac{K^p}{p}
N^{\frac{dp}{2}-d}
\|f\|^p_{L^{p}(\R^d)}
+o(N^{\frac{dp}{2}-d}),
\end{split}
\label{sp3}
\end{align}

\noi
as $N \to \infty$, 
where the last step follows from  \eqref{low4}.
Therefore, from \eqref{sp3} and \eqref{sp2}, 
we obtain 
\begin{align*}
\liminf_{N \to \infty}
N^{-\frac{dp}{2}+d}\log Z_{K, N}\ge 
(C_B- \eps) \frac{K^{p}}{p} - \eps.
\end{align*}

\noi
Since the choice of $\eps > 0$ is arbitrary, 
we therefore conclude 
\eqref{sp1b}.
This conclude the proof of Theorem \ref{THM:2}.

\section{Declarations}

\noi
{\bf Funding.}
D.G., G.L, and T.O.~were supported by the European Research Council
(grant no.~864138 ``SingStochDispDyn'').
G.L.~was also supported by the EPSRC New Investigator Award (grant no. EP/S033157/1).
T.O.~was also supported by the EPSRC Mathematical Sciences Small
Grant (grant no. EP/Y033507/1).
Y.W. was supported by  the EPSRC New Investigator
Award (grant no.~EP/V003178/1)
and by
 the EPSRC Mathematical Sciences Small Grant
 (grant no.~UKRI1116).

\medskip

\noi
{\bf Competing interests.}
The authors have no competing interests to declare that are relevant to the content of this article.

\medskip

\noi
{\bf Data availability statement.}
This manuscript has no associated data.




%
%

\end{document}